\tikzset{commutative diagrams/.cd,mysymbol/.style={start anchor=center, end anchor=center,draw=none}}
\newcolumntype{L}[1]{>{\raggedright\let\newline\\\arraybackslash\hspace{0pt}}m{#1}}
\newcolumntype{C}[1]{>{\centering\let\newline\\\arraybackslash\hspace{0pt}}m{#1}}
\newcolumntype{R}[1]{>{\raggedleft\let\newline\\\arraybackslash\hspace{0pt}}m{#1}}
\newcommand\dhxrightarrow[2][]{%
	\mathrel{\ooalign{$\xrightarrow[#1\mkern4mu]{#2\mkern4mu}$\cr%
			\hidewidth$\rightarrow\mkern4mu$}}
}
\newtheorem{theorem}{Theorem}
\newtheorem{lemma}[theorem]{Lemma}
\newtheorem{proposition}[theorem]{Proposition}
\newtheorem{corollary}[theorem]{Corollary}
\newtheorem{definition}[theorem]{Definition}
\newtheorem{example}[theorem]{Example}
\newcommand{\isom}{\cong}
\newcommand{\FF}{\mathbb{F}}
\newcommand{\Aut}{\mathrm{Aut}}
\newcommand{\Gal}{\mathrm{Gal}}
\newcommand{\Galk}{\Gal(\bar{k}/k)}
\newcommand{\End}{\mathrm{End}}
\newcommand{\Hom}{\mathrm{Hom}}
\newcommand{\Cl}{{\mathcal{C}\!\ell}}
\renewcommand{\SS}{\mathrm{SS}}
\newcommand{\Tr}{\mathrm{Tr}}
\newcommand{\Nr}{\mathrm{Nr}}
\newcommand{\disc}{\mathrm{disc}}
\newcommand{\PP}{\mathbb{P}}
\newcommand{\QQ}{\mathbb{Q}}
\newcommand{\ZZ}{\mathbb{Z}}
\newcommand{\DD}{\Delta}
\renewcommand{\aa}{\mathfrak{a}}
\newcommand{\pp}{\mathfrak{p}}
\newcommand{\qq}{\mathfrak{q}}
\newcommand{\qqi}{\mathfrak{q}_{(i)}}
\newcommand{\cG}{\mathcal{G}}
\newcommand{\OO}{\mathcal{O}}
\newcommand{\XX}{\mathcal{X}}
\newcommand{\ignore}[1]{}
\def\sqt{1.732050807568877}   
\def\centerarc[#1](#2)(#3:#4:#5)
\title{Orienting supersingular isogeny graphs}
\author{Leonardo Col{\`o} and David Kohel}
\address{Institut de Math\'ematiques de Marseille\\
Aix Marseille Universit\'e}
\begin{document}

\begin{abstract}
We introduce a category of $\OO$-oriented supersingular elliptic curves and
derive properties of the associated oriented and nonoriented $\ell$-isogeny
supersingular isogeny graphs.  As an application we introduce an oriented
supersingular isogeny Diffie-Hellman protocol (OSIDH), analogous to the
supersingular isogeny Diffie-Hellman (SIDH) protocol and generalizing the
commutative supersingular isogeny Diffie-Hellman (CSIDH) protocol.
\end{abstract}

\keywords{Supersingular elliptic curves, isogeny graphs}

\maketitle

\section{Introduction}

In this paper we introduce a category of supersingular elliptic curves oriented by an
imaginary quadratic order $\OO$, and derive properties of the associated
oriented and non-oriented supersingular $\ell$-isogeny graphs.
This permits one to derive a faithful group action on a subset of oriented supersingular
curves, equipped with a forgetful map to the set of non-oriented supersingular curves.
As an application we introduce an oriented supersingular isogeny Diffie-Hellman
protocol (OSIDH), analogous to the supersingular isogeny Diffie-Hellman (SIDH)
of De Feo and Jao~\cite{JDF2011} and generalizing the commutative supersingular isogeny
Diffie-Hellman (CSIDH) of Castryck, Lange, Martindale, Panny and Renes \cite{CLMPR2018}, the
latter based on the idea of group actions on sets by Couveignes~\cite{C2006} and
Rostovtsev-Stolbunov~\cite{RS2006}.
Renewed interest in these isogeny-based protocols is motivated by their presumed
resistance to quantum attacks, and this work both enlarges the class of
isogeny-based protocols and provides a framework for their security analysis.

We study some theoretical and practical aspects of the endomorphism ring of a supersingular
elliptic curve and their connection with isogeny graphs. The central idea is to use an
embedding of a quadratic imaginary order into the endomorphism ring of a supersingular
elliptic curve, a maximal order in a quaternion algebra, to introduce an orientation on
the curve.
This extra piece of information permits one to impose compatible actions of the class
groups of the suborders of this quadratic order on the descending isogeny chains and
therefore on the isogeny volcano of oriented curves.

We observe that the starting vertex of the chain can be chosen to have a special
orientation (by an order of class number one) and that computations can be performed
using modular polynomials. This motivates us to introduce a Diffie-Hellman key
exchange protocol that avoids limitations imposed by earlier constructions.

The idea of SIDH is to fix a large prime number $p$ of the form
$p={\ell_A^{e_A}\ell_B^{e_B}f\pm1}$ for a small cofactor
$f$ and to let the two parties Alice and Bob take random walks
(i.e., isogenies chains) of length $e_A$ (or $e_B$) in the
$\ell_A$-isogeny graph (or the $\ell_B$-isogeny graph, respectively)
on the set of supersingular $j$-invariants defined over $\FF_{p^2}$.
In order to have the two key spaces of similar size
$\ell_A^{e_A}\approx\ell_B^{e_B}$, we need to take
$\ell_A^{e_A}\approx\ell_B^{e_B}\approx \sqrt{p}$.
Since the total number of supersingular $j$-invariants
is around $p/12$, this implies that, for each party,
the space of choices for the secret key is limited to $1/\sqrt{p}$
of the whole set of supersingular $j$-invariants over $\FF_{p^2}$.
In other words, in choosing their secrets, Alice and Bob can go
only ``halfway'' around the graph from the starting vertex $j_0$.

Recently, Castryck, Lange, Martindale, Panny and Renes proposed
another key exchange protocol based on supersingular isogeny graphs
over the prime field $\FF_p$.
We fix a prime of the form $p=4\ell_1\cdot\ldots\cdot\ell_t-1$ and
an elliptic curve $E/\FF_p$ defined by the equation $E:y^2=x^3+ax^2+x$.
The peculiarity of CSIDH is that it works with curves defined
over $\FF_p$ and restricts the endomorphism rings of such curves
to the commutative subring consisting of $\FF_p$-rational endomorphisms.
Starting from this setup, the scheme is an adaptation of the
Couveignes and Rostovtsev-Stolbunov idea. Observe that the choice
of looking at curves defined over $\FF_p$, instead of $\FF_{p^2}$,
limits the key spaces for Alice and Bob to $\#\Cl(\ZZ[\sqrt{-p}])$
supersingular points. For a given $p$, this is the same order of
magnitude, $O(\sqrt{p}\log(p))$, as for SIDH, but the class group
is transitive on this subset.

In this paper we want to describe a new cryptographic protocol,
the OSIDH, defined over an arbitrarily large subset of oriented
supersingular elliptic curves over $\FF_{p^2}$, which combines
features of SIDH and CSIDH, and permits one to cover an
arbitrary proportion of all isomorphism classes of supersingular
elliptic curves.

A feature shared by SIDH and CSIDH is that the isogenies are
constructed as quotients of rational torsion subgroups: the
secret path of length $e_A$ in the $\ell_A$-isogeny graph
corresponds to a secret cyclic subgroup $\langle A\rangle \subseteq
E\left[\ell^{e_A}\right]$ where $A$ is a rational $\ell_A^{e_A}$-torsion point on $E$.
The need for rational points imposes limits on the choice of the
prime $p$ and, thus, of the finite field we work on.
In contrast OSIDH relies on constructions that can be carried out
only with the use of modular polynomials hence avoiding conditions
on the rational torsion subgroup.

In summary, an orientation provides a class group action on lifts
of an arbitrarily large subset of supersingular points. Exploiting
an effective subring $\OO$ of the full endomorphism ring we obtain
an effective action by the class group of this subring on the
isogeny volcano ({\it whirlpool}). This approach generalizes the
class group action of CSIDH where supersingular elliptic curves
are oriented by the commutative subring $\ZZ\left[\pi\right]$
generated by Frobenius $\pi = \sqrt{-p}$.
To avoid subexponential (or polynomial) time reductions, in the
OSIDH protocol, as detailed in Section~\ref{section:OSIDH}, the
orientation and associated class group action is hidden in the
intermediate data exchanged by Alice and Bob.  This gives a
protocol for which the best known attacks at present are fully
exponential.


\section{Orientations, isogeny chains, and ladders}

In this section, we recall the definition of an isogeny graph and introduce
the notion of orienting supersingular elliptic curves and their isogenies by
an imaginary quadratic field $K$ and its orders $\OO$.  Finally, we describe
how to impose a structure on an isogeny graph by means of isogeny chains and
how to carry out an effective class group action, by means of ladders.

\subsection*{Isogeny graphs}

Given an elliptic curve $E$ over a field $k$, and a finite set of primes
$S$, we can associate an {\it isogeny graph} $\Gamma = \Gamma_S(E)$,
whose vertices are elliptic curves $\bar{k}$-isogenous to $E$, with
fixed vertex $E$, and whose directed edges are isogenies of degree $\ell \in S$.
The vertices are defined up to $\bar{k}$-isomorphism, and the edges
from a given vertex are defined up to a $\bar{k}$-isomorphism of the
codomain.
If $S = \{\ell\}$, then we call $\Gamma$ an $\ell$-isogeny graph, which
we write as $\Gamma_\ell(E)$.

An $\ell$-isogeny graph $\Gamma$ is equiped with an action of $\cG = \Galk$,
with the vertex $[E]$ a fixed point, as follows. We have
$$
E[\ell] = \{ P \in E(\bar{k}) \;|\; \ell P = O \} \cong (\ZZ/\ell\ZZ)^2.
$$
The set of cyclic subgroups is in bijection with $\PP(E[\ell]) \isom \PP^1(\ZZ/\ell\ZZ)$,
which in turn is in bijection with the set of $\ell$-isogenies from $E$.
The $\cG$-action on $E[\ell]$ induces an action by $\cG$ on the $\ell+1$ cyclic subgroups.
This action extends to paths without backtracking of length~$n$, via the
action on the cyclic subgroups $G$ of order $\ell^n$ in
$$
E[\ell^n] = \{ P \in E(\bar{k}) \;|\; \ell^n P = O \} \cong (\ZZ/\ell^n\ZZ)^2.
$$
which are in bijection with $\PP(E[\ell^n]) \isom \PP^1(\ZZ/\ell^n\ZZ)$.
This determines a compatible Galois action on vertices $[E/G]$ and edges
$\varphi: E/G_i \to E/G_{i+1}$ where $G_i \subset G_{i+1}$ is of index~$\ell$.
The action on infinite paths from $E$ is thus determined by the
Galois action on the projective Tate module $\PP(T_\ell(E)) \isom \PP^1(\ZZ_\ell)$.
In the same way we define the $\cG$-action on $\Gamma_S(E)$ derived from
the $\cG$-set structure of $\PP(T_S(E))$, where
$$
T_S(E) = \prod_{\ell \in S} T_\ell(E).
$$
The choice of base curve $E$ determines a Galois action on $\Gamma$,
conjugate to the Galois action induced by a twist of $E$.

Thus an $\ell$-isogeny graph is $(\ell+1)$-regular for outgoing edges.
The existence of curves of $j$-invariant $0$ or $12^3$ with additional
automorphisms in the graph implies a reduced number of incoming edges
at these vertices.
We define an undirected graph $\overline{\Gamma}_\ell(E)$ by identifying an
isogeny $\varphi: E_0 \to E_1$ with its dual $\hat\varphi : E_1 \to E_0$,
and if $\Aut(E_0) \ne \{\pm1\}$ or $\Aut(E_1) \ne \{\pm1\}$ the orbits
$$
\Aut(E_1)\varphi\Aut(E_0) \mbox{ and } \Aut(E_0)\hat\varphi\Aut(E_1)
$$
are identified, which gives a non-bijective correspondence between edges
and dual edges.
\ignore{
For example, a curve $E_0$ such that $|\Aut(E_0)| = 6$
and curve $E_1$ such that $|\Aut(E_1)| = 2$, gives rise to a diagram
\def\sqt{1.7}
\def\rad{0.3}
\newlength{\mylinewidth}
\setlength{\mylinewidth}{0.4mm}
\begin{center}
\begin{tikzpicture}[every text node part/.style={align=center}]
\tkzDefPoint(1,2-\sqt){O1}
\tkzDefPoint(1,2+\sqt){O2}
\tkzDefPoint(1,4-\sqt){V1}
\tkzDefPoint(1,\sqt){V2}

\centerarc[white,line width=\mylinewidth,name path=arc1](O1)(120:60:2);
\centerarc[white,line width=\mylinewidth,name path=arc2](O2)(240:300:2);

\draw[fill=white] (0,2) circle [radius=\rad] node {$E_0$};
\path[name path=circ0] (0,2) circle [radius=\rad+0.05];
\draw[fill=white] (2,2) circle [radius=\rad] node {$E_1$};
\path[name path=circ40] (2,2) circle [radius=\rad+0.05];
\path[name path=line0to40c] (0,2) -- (2,2);
\path [name intersections={of=circ0 and line0to40c,by=Int0c}];
\path [name intersections={of=circ40 and line0to40c,by=Int40c}];
\draw[->,black,line width=\mylinewidth] (Int0c) -- (Int40c);

\path [name intersections={of=circ0 and arc1,by=Int0t}];
\tkzFindAngle (V1,O1,Int0t) \tkzGetAngle{AnA} \FPround\AnA\AnA{0}
\centerarc[->,line width=\mylinewidth](O1)(90+\AnA:90-\AnA:2);

\path [name intersections={of=circ0 and arc2,by=Int0b}];
\tkzFindAngle (V2,O2,Int0b) \tkzGetAngle{AnB} \FPround\AnB\AnB{0}
\centerarc[<-,line width=\mylinewidth](O2)(270-\AnB:270+\AnB:2);

\path[name path=line40to0] (2,2) -- (2,2.6) -- (0,2.6) -- (0,2);
\path [name intersections={of=circ40 and line40to0,by=Int40n}];
\path [name intersections={of=circ0 and line40to0,by=Int0n}];
\draw[->,black,line width=\mylinewidth] (Int40n) -- (2,2.6) -- (0,2.6) -- (Int0n);
\end{tikzpicture}
\end{center}
}

\begin{lemma}
Let $E$ be an elliptic curve over $k$ with endomorphism ring $\OO$,
and for a prime $\ell \ne \mathrm{char}(k)$ let $\overline{\Gamma}_\ell(E)$
be its undirected $\ell$-isogeny graph.
\begin{enumerate}
\setlength\itemsep{0mm}
\item
\label{ZZ}
If $\OO = \ZZ$, then each component of $\overline{\Gamma}_\ell(E)$ is an infinite tree.
\item
\label{CM}
If $\OO$ is an order in a CM field $K$, then each component $\overline\Gamma$ of
$\overline{\Gamma}_\ell(E)$ is infinite and either
\begin{itemize}
\setlength\itemsep{0mm}
\item
the prime $\ell$ is split in $K$ and $\overline\Gamma$ has a unique cycle, or
\item
the prime $\ell$ is ramified or inert in $K$ and $\overline\Gamma$ is a tree.
\end{itemize}
\item
\label{QM}
If $\OO$ is an order in a quaternion algebra, then $\overline\Gamma_\ell(E)$ is finite and connected.
\end{enumerate}
\end{lemma}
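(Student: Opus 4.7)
The plan is to analyze the local structure at each vertex of $\overline\Gamma_\ell(E)$ according to the endomorphism type, then glue these local pictures globally. A common preliminary observation is that $\End(E')\otimes\QQ$ is an isogeny invariant, so every vertex $E'$ of the connected component of $E$ has endomorphism ring of the same type (trivial, CM, or quaternionic) as $\OO$.

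For (\ref{ZZ}), every vertex in the component then has endomorphism ring $\ZZ$. A non-backtracking closed walk of length $n\ge 1$ from $E$ to itself would give a cyclic $\ell^n$-isogeny $E\to E$, which is a non-scalar endomorphism of $E$, contradicting $\End(E)=\ZZ$. Hence the component is acyclic, and since every vertex has degree $\ell+1\ge 3$, a connected acyclic $(\ell+1)$-regular graph is an infinite tree.

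For (\ref{CM}), every vertex has endomorphism ring some order $\OO'$ in $K$. I would invoke Deuring's classification of $\ell$-isogenies from a CM curve with $\End=\OO'$: each is horizontal, ascending, or descending according as the codomain has the same endomorphism order, a superorder of index $\ell$, or a suborder of index $\ell$. When $\ell \nmid [\OO_K:\OO']$, the number of horizontal edges is $2$, $1$, or $0$ according as $\ell$ is split, ramified, or inert in $K$; otherwise there is exactly one ascending edge and $\ell$ descending ones. The assembled picture is a volcano: a crater of vertices maximal at $\ell$, with an infinite $\ell$-ary tree of descending isogenies hanging from each crater vertex. In the split case the crater is a single cycle, given by the orbit under a prime above $\ell$ in the relevant class group, which is the unique cycle of the component; in the ramified case the unique horizontal isogeny at a crater vertex is its own dual and contributes no cycle in the undirected graph; in the inert case there are no horizontal edges at all. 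Either way each component is infinite, and contains a cycle only in the split case.

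For (\ref{QM}), $\OO$ quaternionic forces $E$ supersingular, hence $\mathrm{char}(k)=p>0$ and every vertex is a supersingular $j$-invariant in $\overline{\FF}_p$; this set is finite, so $\overline\Gamma_\ell(E)$ is finite. Connectivity is the classical theorem of Mestre and Pizer, which I would quote: it follows, via strong approximation for the definite quaternion algebra $B_{p,\infty}$, from the transitivity of the $\ell$-power Hecke correspondence on the adelic double coset space parametrising supersingular isogeny classes, or equivalently from the simplicity of the Perron--Frobenius eigenvalue of the Brandt matrix $B(\ell)$. The main obstacle in the proof is the bookkeeping in (\ref{CM}), in particular verifying in the split case that the horizontal crater edges assemble into a single cycle rather than several smaller ones; (\ref{QM}) rests on quoting supersingular connectivity as a black box.
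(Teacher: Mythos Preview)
The paper states this lemma without proof; the paragraph following it is context, not an argument. Your outline via the volcano structure of Kohel's thesis is the standard route and is essentially correct, including your identification of the only genuine work in case~(\ref{CM}) as checking that the split-case crater is a single cycle (which it is, being the orbit of one crater vertex under the cyclic group generated by $[\mathfrak{l}]$ in the class group of the $\ell$-maximal order).

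One small correction in the ramified subcase: the horizontal edge from a crater vertex is not literally ``its own dual'' unless it happens to be a self-loop. What is true is that $\mathfrak{l}=\bar{\mathfrak{l}}$, so the two horizontal directions collapse to a single edge, landing on a distinct crater vertex when $[\mathfrak{l}]$ has order~$2$ in the class group, or on the same vertex when $\mathfrak{l}$ is principal (e.g.\ $K=\QQ(\sqrt{-2})$, $\ell=2$). In the principal case the component carries a self-loop and is not strictly a tree; the lemma's wording is already slightly informal there, and your argument is as precise as the statement allows.
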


If $E$ is defined over a number field, then case~\eqref{ZZ} is the generic
case and in the CM case~\eqref{CM}, every curve admits an embedding of an
order of $K$ in its endomorphism ring, and the Galois action is determined
by CM theory (see Shimura~\cite{Shimura}).
If $E$ is defined over a finite field, then only case~\eqref{CM} (ordinary)
or case~\eqref{QM} (supersingular) can hold.  The ordinary case gives rise
to an $\ell$-isogeny graph in bijection with the CM graph with CM field
$K = \QQ(\pi)$, where $\pi$ is the Frobenius endomorphism.
In the supersingular case we have more precisely that there are
$$
\frac{(p-1)}{12} + \frac{1}{3}\left(1 - \left(\!\frac{-3\ }{p}\!\right)\right) + \frac{1}{4}\left(1 - \left(\!\frac{-4\ }{p}\!\right)\right)
$$
vertices.
In the next section we introduce the notion of a $K$-orientation by an
imaginary quadratic field $K$, which allows us to canonically lift the
finite supersingular graph to an infinite oriented CM graph.

\subsection*{Orientations}

Suppose now that $E$ is a supersingular elliptic curve over a finite field $k$
of characteristic $p$, and denote by $\End(E)$ the full endomorphism ring.
We assume moreover that $k$ contains $\FF_{p^2}$ and $E$ is in an isogeny class
such that $\End_k(E) = \End(E)$.

We denote by $\End^0(E)$ the $\QQ$-algebra $\End(E) \otimes_\ZZ \QQ$.
In particular, $\End^0(E)$ is the unique quaternion algebra over $\QQ$
ramified at $p$ and $\infty$.

Let $K$ be a quadratic imaginary field of discriminant $\DD_K$ with maximal
order $\OO_K$.  Then there exists an embedding $\iota : K \to \End^0(E)$
if and only if $p$ is inert or ramified in $\OO_K$, and there exists an
order $\OO \subseteq \OO_K$ such that $\iota(\OO) = \iota(K) \cap \End(E)$.

\begin{definition}
A {\it $K$-orientation} on a supersingular elliptic curve $E/k$ is a
homomorphism $\iota:K\hookrightarrow\End^0(E)$.
An {\it $\OO$-orientation} on $E$ is a $K$-orientation such that
the image of the restriction of $\iota$ to $\OO$ is contained in
$\End(E)$. We write $\End((E,\iota))$ for the order $\End(E)
\cap \iota(K)$ in $\iota(K)$.
An $\OO$-orientation is {\it primitive} if $\iota$ induces an
isomorphism of $\OO$ with $\End((E,\iota))$.
\end{definition}

Let $\phi:E \rightarrow F$ be an isogeny of degree $\ell$.
A $K$-orientation $\iota: K\hookrightarrow\End^0(E)$ determines
a $K$-orientation $\phi_*(\iota):K \hookrightarrow \End^0(F)$
on $F$, defined by
$$
\phi_*(\iota)(\alpha) = \frac{1}{\ell}\,\phi\circ\iota(\alpha)\circ\hat\phi.
$$
Conversely, given $K$-oriented elliptic curves $(E,\iota_E)$ and
$(F,\iota_F)$ we say that an isogeny $\phi : E \rightarrow F$ is
{\it $K$-oriented} if $\phi_*(\iota_E) = \iota_F$, i.e.~if the orientation
on $F$ is induced by $\phi$.
The restriction to $K$-oriented isogenies determines a category of
$K$-oriented elliptic curves, hence of $K$-oriented isomorphism classes,
and a subcategory of $\OO$-oriented elliptic curves.

If $E$ admits a primitive $\OO$-orientation by an order $\OO$ in $K$,
$\phi: E \rightarrow F$ is an isogeny then $F$ admits an induced
primitive $\OO'$-orientation for an order $\OO'$ satisfying
$$
\ZZ + \ell\OO \subseteq \OO' \mbox{ and } \ZZ + \ell\OO' \subseteq \OO.
$$
We say that an isogeny $\phi: E \rightarrow F$ is an $\OO$-oriented isogeny
if $\OO = \OO'$.

If $\ell$ is prime, as direct analogue of Proposition~4.2.23 of~\cite{Kohel1996},
one of the following holds:
\begin{itemize}
\item
$\OO = \OO'$ and we say that $\phi$ is {\it horizontal},
\item
$\OO\subset\OO'$ with  index $\ell$ and we say that $\phi$ is {\it ascending},
\item
$\OO'\subset \OO$ with index $\ell$ and we say that $\phi$ is {\it descending}.
\end{itemize}
Moreover if the discriminant of $\OO$ is $\DD$, then there
are exactly
$
\ell - \left(\frac{\DD}{\ell}\right)
$
descending isogenies. If $\OO$ is maximal at $\ell$, then there are
$
\left(\frac{\DD}{\ell}\right)+1
$
horizontal isogenies, and if $\OO$ is non-maximal at $\ell$, then
there is exactly one ascending $\ell$-isogeny and no horizontal
isogenies.

For an oriented class $(E,\iota)$ with endomorphism ring
$\OO = \End((E,\iota))$, we define $(E,\iota)$ to be at the
{\it surface} (or depth~$0$) if $\OO$ is $\ell$-maximal,
and to be at {\it depth}~$n$ if the valuation at $\ell$
of $[\OO_K:\OO]$ is $n$.
In the next section we introduce $\ell$-isogeny chains linking
oriented curves at the surface to oriented curves at depth~$n$.

The oriented graph $\Gamma_S(E,\iota)$ is the graph whose
vertices are $K$-oriented isomorphism classes, with fixed base
vertex $(E,\iota)$, and whose edges are $K$-oriented $\ell$-isogenies
for $\ell$ in $S$.

\subsection*{Isogeny chains and ladders}

Let $E_0/k$ be a fixed supersingular elliptic curve, equipped with
an $\OO$-orientation, and let $\ell \ne p$ be a prime.

\begin{definition}
We define an {\it $\ell$-isogeny chain} of length $n$ from $E_0$
to $E$ to be a sequence of isogenies of degree $\ell$:
\[
E_0\xrightarrow{~~~\phi_0~~~} 
E_1\xrightarrow{~~~\phi_1~~~}
E_2\xrightarrow{~~~\phi_2~~~}
\ldots
\xrightarrow{~~~\phi_{n-1}~~~} E_n = E.
\]
We say that the $\ell$-isogeny chain is {\it without backtracking}
if $\ker(\phi_{i+1} \circ \phi_{i}) \ne E_{i}[\ell]$ for each
$i = 0,\dots,n-1$, and say that the isogeny chain is {\it descending}
(or {\it ascending}, or {\it horizontal}) if each $\phi_{i}$ is
descending (or ascending, or horizontal, respectively).
\end{definition}

\noindent{\bf Remark.}
Since the dual isogeny of $\phi_i$, up to isomorphism, is the only
isogeny $\phi_{i+1}$ satisfying $\ker(\phi_{i+1} \circ \phi_{i})
= E_{i}[\ell]$, an isogeny chain is without backtracking if and
only if the composition of two consecutive isogenies is cyclic.
Moreover, we can extend this characterization 
in terms of cyclicity to the entire $\ell$-isogeny chain.

\begin{lemma}
The composition of the isogenies in an $\ell$-isogeny chain is cyclic
if and only if the $\ell$-isogeny chain is without backtracking.
\end{lemma}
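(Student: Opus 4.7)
The plan is to prove both directions through the characterization that an isogeny $\Phi : E_0 \to E_n$ of degree $\ell^n$ is cyclic iff $E_0[\ell] \not\subseteq \ker(\Phi)$, equivalently iff $\Phi$ does not factor through $[\ell]_{E_0}$. Writing $\Phi_n = \phi_{n-1}\circ\cdots\circ\phi_0$, the task is then to relate the condition $E_0[\ell] \subseteq \ker(\Phi_n)$ to the appearance of a backtracking step.

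For the forward implication I argue by contrapositive. Suppose backtracking occurs at step $i$; by the remark preceding the lemma, $\phi_{i+1}$ agrees with $\hat{\phi}_i$ up to isomorphism of the codomain, so $\phi_{i+1}\circ\phi_i = [\ell]_{E_i}$. Because $[\ell]$ commutes with every isogeny, I slide this factor to the right to obtain
\[
\Phi_n = (\phi_{n-1}\circ\cdots\circ\phi_{i+2})\circ(\phi_{i-1}\circ\cdots\circ\phi_0)\circ[\ell]_{E_0},
\]
which displays $E_0[\ell] \subseteq \ker(\Phi_n)$; hence $\ker(\Phi_n)$ is not cyclic.

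For the converse I induct on $n$, the case $n=1$ being trivial. Assume the composition $\Phi_n$ arising from a length-$n$ chain without backtracking is cyclic. The key step is to identify the image $\Phi_n(E_0[\ell]) \subseteq E_n$ with $\ker(\hat{\phi}_{n-1})$. From $\hat{\Phi}_n\circ\Phi_n = [\ell^n]_{E_0}$ and a rank count, $\Phi_n(E_0[\ell^n]) = \ker(\hat{\Phi}_n)$; this group is cyclic of order $\ell^n$, since $\hat{\Phi}_n$ is dual to a cyclic isogeny and hence itself cyclic. Its unique order-$\ell$ subgroup is $\ker(\hat{\phi}_{n-1})$: this is contained in $\ker(\hat{\Phi}_n)$ because $\hat{\Phi}_n = \hat{\phi}_0\circ\cdots\circ\hat{\phi}_{n-2}\circ\hat{\phi}_{n-1}$ factors through $\hat{\phi}_{n-1}$, and it already has order $\ell$. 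Taking $\ell$-torsion in $\Phi_n(E_0[\ell^n]) = \ker(\hat{\Phi}_n)$ yields $\Phi_n(E_0[\ell]) = \ker(\hat{\phi}_{n-1})$.

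Now $\Phi_{n+1} = \phi_n\circ\Phi_n$ fails to be cyclic iff $\Phi_n(E_0[\ell]) \subseteq \ker(\phi_n)$; as both groups have order $\ell$, this forces $\ker(\hat{\phi}_{n-1}) = \ker(\phi_n)$, equivalently $\phi_n = \hat{\phi}_{n-1}$ up to isomorphism, i.e.\ backtracking at step $n-1$. The main computational step is thus the identification $\Phi_n(E_0[\ell]) = \ker(\hat{\phi}_{n-1})$, which pinpoints the distinguished order-$\ell$ subgroup of $E_n$ visible from the chain and reduces the whole equivalence to comparing two subgroups of order $\ell$.
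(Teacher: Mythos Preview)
The paper states this lemma without proof, so there is no argument to compare against directly; your proposal must stand on its own, and it essentially does. The overall strategy---characterizing cyclicity of an $\ell^n$-isogeny by the condition $E_0[\ell] \not\subseteq \ker(\Phi_n)$, and then identifying $\Phi_n(E_0[\ell])$ with $\ker(\hat{\phi}_{n-1})$ inductively via the cyclic group $\ker(\hat{\Phi}_n)$---is correct and the converse direction is carried out cleanly.

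There is one notational slip in the forward direction. When backtracking occurs at step $i$, the equality $\phi_{i+1}\circ\phi_i = [\ell]_{E_i}$ cannot hold as written, since the left-hand side is a map $E_i \to E_{i+2}$ while the right-hand side is an endomorphism of $E_i$. What the remark before the lemma gives you is $\phi_{i+1} = \alpha\circ\hat{\phi}_i$ for some isomorphism $\alpha : E_i \to E_{i+2}$, hence $\phi_{i+1}\circ\phi_i = \alpha\circ[\ell]_{E_i}$. Correspondingly, your displayed factorization
\[
\Phi_n = (\phi_{n-1}\circ\cdots\circ\phi_{i+2})\circ(\phi_{i-1}\circ\cdots\circ\phi_0)\circ[\ell]_{E_0}
\]
does not typecheck (the codomain of $\phi_{i-1}\circ\cdots\circ\phi_0$ is $E_i$, not $E_{i+2}$); it should read $(\phi_{n-1}\circ\cdots\circ\phi_{i+2})\circ\alpha\circ(\phi_{i-1}\circ\cdots\circ\phi_0)\circ[\ell]_{E_0}$. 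This is purely cosmetic---once $\alpha$ is inserted, the conclusion $E_0[\ell]\subseteq\ker(\Phi_n)$ follows exactly as you claim.
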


\noindent{\bf Remark.}
If an isogeny $\phi$ is descending, then the unique ascending isogeny
from $\phi(E)$, up to isomorphism, is the dual isogeny $\hat{\phi}$,
satisfying $\hat{\phi} \phi = [\ell]$.
As an immediate consequence, a descending $\ell$-isogeny chain is
automatically without backtracking, and an $\ell$-isogeny chain without
backtracking is descending if and only if $\phi_0$ is descending.
\vspace{2mm} 

Suppose that $(E_i,\phi_i)$ is an $\ell$-isogeny chain, with $E_0$ equipped
with an $\OO_K$-orientation $\iota_0: \OO_K \rightarrow \End(E_0)$. For each $i$, let $\iota_i : K \rightarrow \End^0(E_i)$ be the induced
$K$-orientation on $E_i$; we note $\OO_i = \End(E_i) \cap \iota_i(K)$ with $\OO_0=\OO_K$ and $\DD_i=\mathrm{discr}(\OO_i)$  with $\DD_0=\DD_K$.

In particular, if $(E_i,\phi_i)$ is a descending $\ell$-chain, then $\iota_i$
induces an isomorphism
$$
\iota_i : \ZZ + \ell^i \OO_K \longrightarrow \OO_i.
$$

Let $q$ be a prime different from $p$ and $\ell$ that splits in $\OO_K$,
let $\qq$ be a fixed prime over $q$.
For each $i$ we set $\qqi = \iota_i(\qq) \cap \OO_i$, and define
$$
C_i=E_i[\qqi] = \{ P \in E_i[q] \;|\; \psi(P) = 0 \mbox{ for all } \psi \in \qqi \}.
$$
We define $F_i = E_i/C_i$, and let $\psi_i: E_i \rightarrow F_i$, an isogeny
of degree $q$. By construction, it follows that $\phi_i(C_i) = C_{i+1}$ for
all $i = 0,\dots,n-1$.
In particular, if $(E_i,\phi_i)$ is a descending $\ell$-ladder, then $\iota_i$
induces an isomorphism
$$
\iota_i : \ZZ + \ell^i \OO_K \longrightarrow \OO_i.
$$
The isogeny $\psi_0: E_0 \rightarrow F_0 = E/C_0$ gives the following diagram of isogenies:
\begin{center}
\begin{tikzpicture}
\draw[fill=black] (0,0) circle [radius=.1]
(2,0) circle [radius=.1]
(4,0) circle [radius=.1]
(8,0) circle [radius=.1]
(0,-1) circle [radius=.1];  

\draw[fill=black]
(5.9,0) circle [radius=.01]
(6.0,0) circle [radius=.01]
(6.1,0) circle [radius=.01];    

\draw[->](0.15,0)--(1.85,0);   
\draw[->](2.15,0)--(3.85,0);   
\draw[->](4.15,0)--(5.8,0);    
\draw[->](6.2,0)--(7.85,0);    

\draw[->](0,-0.15)--(0,-0.85); 

\node at (0,0.3) {\small$E_0$};
\node at (2,0.3) {\small$E_1$};
\node at (4,0.3) {\small$E_2$};
\node at (8,0.3) {\small$E_n$};
\node at (0,-1.3) {\small$F_0$};
\node at (-0.20,-0.5) {\footnotesize$\psi_0$};
\node at (1,0.15) {\footnotesize$\phi_0$};
\node at (3,0.15) {\footnotesize$\phi_1$};
\node at (5,0.15) {\footnotesize$\phi_2$};
\node at (7,0.15) {\footnotesize$\phi_{n-1}$};
\end{tikzpicture}
\end{center}
and for each $i = 0,\dots,n-1$ there exists a unique $\phi_{i}': F_{i} \rightarrow F_{i+1}$
with kernel $\psi_{i}(\ker(\phi_i))$ such that the following diagram commutes:
\begin{center}
\begin{tikzpicture}
\draw[fill=black]
(0,0) circle [radius=.1]
(2,0) circle [radius=.1]
(0,-1) circle [radius=.1]
(2,-1) circle [radius=.1];   

\draw[->](0.15,0)--(1.85,0);    
\draw[->](0,-0.15)--(0,-0.85);  
\draw[->](2,-0.15)--(2,-0.85);  
\draw[->](0.15,-1)--(1.85,-1);  

\node at (-0.4,0.3) {\small$C_{i}\subseteq E_{i}$};
\node at (2.5,0.3) {\small$E_{i+1}\supseteq C_{i+1}$};

\node at (0.0,-1.3) {\small$F_{i}$};
\node at (2.0,-1.3) {\small$F_{i+1}$};

\node at (1,0.2) {\footnotesize$\phi_{i}$};
\node at (-0.2,-0.5) {\footnotesize$\psi_{i}$};
\node at (1.7,-0.5) {\footnotesize$\psi_{i+1}$};
\node at (1,-0.80) {\footnotesize$\phi_{i}'$};
\end{tikzpicture}
\end{center}
The isogenies $\psi_i: E_i \rightarrow F_i$ induce orientations
$\iota_i': \OO_i' \rightarrow \End(F_i)$.
This construction motivates the following definition.
\begin{definition}
An {\it $\ell$-ladder} of length $n$ and degree $q$ is a commutative diagram
of $\ell$-isogeny chains $(E_i,\phi_i)$ and $(F_i,\phi_i')$ of length $n$
connected by $q$-isogenies $(\psi_i:E_i\rightarrow F_i)$:
\begin{center}
\begin{tikzpicture}
\draw[fill=black] (0,1) circle [radius=.1]
(2,1) circle [radius=.1]
(4,1) circle [radius=.1]
(7,1) circle [radius=.1]
(0,0) circle [radius=.1]
(2,0) circle [radius=.1]
(4,0) circle [radius=.1]
(7,0) circle [radius=.1];  

\draw[fill=black] (5.425,0) circle [radius=.01]
(5.50,0) circle [radius=.01]
(5.575,0) circle [radius=.01]
(5.425,1) circle [radius=.01]
(5.50,1) circle [radius=.01]
(5.575,1) circle [radius=.01];    
\draw[->](0.15,1)--(1.85,1);
\draw[->](2.15,1)--(3.85,1);
\draw[->](4.15,1)--(5.35,1);
\draw[->](5.65,1)--(6.85,1);
\draw[->](0.15,0)--(1.85,0);
\draw[->](2.15,0)--(3.85,0);
\draw[->](4.15,0)--(5.35,0);
\draw[->](5.65,0)--(6.85,0);
\draw[->](0,0.85)--(0,0.15);
\draw[->](2,0.85)--(2,0.15);
\draw[->](4,0.85)--(4,0.15);
\draw[->](7,0.85)--(7,0.15);
\node[anchor=south] at (0,1) {\small$E_0$};
\node[anchor=south] at (2,1) {\small$E_1$};
\node[anchor=south] at (4,1) {\small$E_2$};
\node[anchor=south] at (7,1) {\small$E_n$};
\node[anchor=north] at (0,-0.01) {\small$F_0$};
\node[anchor=north] at (2,-0.01) {\small$F_1$};
\node[anchor=north] at (4,-0.01) {\small$F_2$};
\node[anchor=north] at (7,-0.01) {\small$F_n$};
\node at (1,1.2) {\footnotesize$\phi_0$};
\node at (3,1.2) {\footnotesize$\phi_1$};
\node at (4.75,1.2) {\footnotesize$\phi_2$};
\node at (6.30,1.2) {\footnotesize$\phi_{n-1}$};
\node at (1,-0.2) {\footnotesize$\phi_0'$};
\node at (3,-0.2) {\footnotesize$\phi_1'$};
\node at (4.75,-0.2) {\footnotesize$\phi_2'$};
\node at (6.30,-0.2) {\footnotesize$\phi_{n-1}'$};

\node at (-0.2,0.5) {\footnotesize$\psi_0$};
\node at (1.8,0.5) {\footnotesize$\psi_1$};
\node at (3.8,0.5) {\footnotesize$\psi_2$};
\node at (6.8,0.5) {\footnotesize$\psi_{n}$};

\end{tikzpicture}
\end{center}
We also refer to an $\ell$-ladder of degree $q$ as a {\it $q$-isogeny} of
$\ell$-isogeny chains, which we express as $\psi: (E_i,\phi_i) \rightarrow
(F_i,\phi_i')$.

We say that an $\ell$-ladder is ascending (or descending, or horizontal)
if the $\ell$-isogeny chain $(E_i,\phi_i)$ is ascending (or descending,
or horizontal, respectively).
We say that the $\ell$-ladder is {\it level} if $\psi_0$ is a horizontal
$q$-isogeny. If the $\ell$-ladder is descending (or ascending), then we
refer to the length of the ladder as its {\it depth} (or, respectively,
as its {\it height}).
\end{definition}

\begin{lemma}\label{lemma:level}
An $\ell$-ladder $\psi: (E_i,\phi_i) \rightarrow (F_i,\phi_i')$ of oriented elliptic
curves is level if and only if $\End((E_i,\iota_i))$ is isomorphic to
$\End((F_i,\iota_i'))$ for all $0\leq i \leq n$.
In particular, if the $\ell$-ladder is level, then $(E_i,\phi_i)$ is descending
(or ascending, or horizontal) if and only if $(F_i,\phi_i')$ is descending
(or ascending, or horizontal).
\end{lemma}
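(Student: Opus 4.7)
The plan is to dispatch the ``if'' direction from the definition of horizontal and to prove the ``only if'' direction by induction on $i$, propagating the equality $\OO_i = \OO_i'$ along the ladder using a prime-by-prime comparison of orders in $K$. For the ``if'' implication, $\End((E_0,\iota_0)) \cong \End((F_0,\iota_0'))$ as orders in $K$ is exactly $\OO_0 = \OO_0'$, which by definition says $\psi_0$ is horizontal, i.e., the ladder is level.

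For the forward direction, the key tool I would first isolate is a local preservation principle: if $\chi:(E,\iota)\to(F,\chi_*(\iota))$ is a $K$-oriented isogeny of degree $m$, then $\End((E,\iota))$ and $\End((F,\chi_*(\iota)))$ coincide as sublattices of $K$ at every rational prime $r\nmid m$. Indeed, for $r\nmid m$ the map $\chi$ induces an isomorphism on $r$-adic Tate modules, and the pushforward formula $\chi_*(\iota)(\alpha) = \tfrac{1}{m}\chi\circ\iota(\alpha)\circ\hat\chi$ acts on $T_r$ as conjugation by this isomorphism, because $\hat\chi\circ\chi=[m]$ is a unit in $\End(T_r(E))\otimes\QQ_r$; hence $r$-adic integrality of $\iota(\alpha)$ and of $\chi_*(\iota)(\alpha)$ coincide. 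The analogous Dieudonn\'e-module argument (or the observation that the $p$-part of an order in $K$ is determined by the quaternion-algebra structure at $p$) handles $r=p$, which in any case is coprime to both $\ell$ and $q$.

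With this principle I would induct on $i$, assuming $\OO_i=\OO_i'$. At any prime $r\neq\ell$, the $\ell$-isogenies $\phi_i$ and $\phi_i'$ preserve $r$-adic parts, so
$$
(\OO_{i+1})_r = (\OO_i)_r = (\OO_i')_r = (\OO_{i+1}')_r
$$
by the inductive hypothesis; at $r=\ell\neq q$, the $q$-isogeny $\psi_{i+1}$ preserves the $\ell$-adic part, giving $(\OO_{i+1})_\ell=(\OO_{i+1}')_\ell$. Together these two ranges exhaust every rational prime, so $\OO_{i+1}=\OO_{i+1}'$, completing the induction from the base case $\OO_0=\OO_0'$.

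For the final ``in particular'' assertion, once $\OO_i=\OO_i'$ and $\OO_{i+1}=\OO_{i+1}'$ hold, the trichotomy recalled earlier in the paper reads off the type of $\phi_i$ entirely from the inclusion pattern of $\OO_i$ in $\OO_{i+1}$ at the prime $\ell$; the identical pattern governs $\phi_i'$, so the two isogenies are simultaneously horizontal, ascending, or descending. The main obstacle I anticipate is pinning down the local preservation principle cleanly, particularly ensuring the pushforward orientation really restricts to the same subring of $K$ at primes away from the degree; once that step is secured, the induction and the final assertion reduce to bookkeeping with conductors.
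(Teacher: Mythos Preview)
The paper states this lemma without proof, so there is nothing to compare your argument against directly; your proposal must stand on its own, and it does: the induction is set up correctly and the ``in particular'' clause follows cleanly.

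One simplification is available. Your local preservation principle---that a degree-$m$ oriented isogeny leaves the order unchanged at every prime $r\nmid m$---is already contained in the trichotomy the paper records just before the lemma (the containments $\ZZ+\ell\OO\subseteq\OO'$ and $\ZZ+\ell\OO'\subseteq\OO$, which iterate to $\ZZ+m\OO\subseteq\OO'$ and $\ZZ+m\OO'\subseteq\OO$ for composite $m$). Those say precisely that the conductors of $\OO$ and $\OO'$ agree away from the prime divisors of $m$, so you can cite that directly rather than passing through Tate and Dieudonn\'e modules. This also circumvents the one soft spot in your write-up: your parenthetical alternative for $r=p$, that ``the $p$-part of an order in $K$ is determined by the quaternion-algebra structure at $p$,'' is not accurate as stated (it depends on the specific embedding $\iota$, not only on the algebra). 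The Dieudonn\'e-module argument you mention first is fine, but the conductor-containment route avoids the issue entirely.
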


\noindent{\bf Remark.}
In the sequel we will assume that $E_0$ is oriented by a maximal order $\OO_K$.
In Section~\ref{section:ActionClassGroup} we investigate using the effective
horizontal isogenies of $E_0$ to derive an effective class group action, and
introduce a modular version of this action in Section~\ref{section:Modularisogenies}.
Walking down a descending isogeny chain, each elliptic curve will be oriented
by an order of decreasing size and the final elliptic curve, which will be our
final object of study, will have an orientation by an order of large index in
$\OO_K$ with action by a large class group.

Since the supersingular $\ell$-isogeny graph is connected, every supersingular
elliptic curve admits an $\ell$-isogeny chain back to a curve oriented by any
given maximal order $\OO_K$, so such a construction exists for any supersingular
elliptic curve.

\section{Oriented curves and class group action}
\label{section:ActionClassGroup}

Let $\mathrm{SS}(p)$ denote the set of supersingular elliptic curves over
$\overline{\FF}_p$ up to isomorphism, and let $\mathrm{SS}_\OO(p)$ be the set
of $\OO$-oriented supersingular elliptic curves up to $K$-isomorphism over
$\overline{\FF}_p$, and denote the subset of primitive $\OO$-oriented
curves by $\mathrm{SS}_\OO^{pr}(p)$.

\subsection*{Class group action}

The set $\mathrm{SS}_\OO(p)$ admits a transitive group action:
\begin{center}
\begin{tikzpicture}
\node[anchor=east] at (0.80,0.66) {$\Cl(\OO) \times \mathrm{SS}_\OO(p)$};
\node[anchor=west] at (4,0.66) {$\mathrm{SS}_\OO(p)$};
\draw[->](0.90,0.66)--(3.90,0.66);
\node[anchor=east] at (0,0) {$(\left[\aa\right],E)$};
\node[anchor=west] at (4,0) {$\left[\aa\right] \cdot E=E/E[\mathfrak{a}]$};
\draw[|->](0.10,0)--(3.90,0);
\end{tikzpicture}
\end{center}
where $\mathfrak{a}$ is any representative ideal coprime to the index
$[\OO_K:\OO]$ so that the isogeny $E \rightarrow E/E[\mathfrak{a}]$
is horizontal.
When restricted to primitive $\OO$-oriented curves, we obtain the following
classical result, extending the standard result for CM elliptic curves.

\begin{theorem}
\label{theorem:oriented-action}
The class group $\Cl(\OO)$ acts faithfully and transitively on the set of
$\OO$-isomorphism classes of primitive $\OO$-oriented elliptic curves.
\end{theorem}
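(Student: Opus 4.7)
The plan is to follow the classical CM-theoretic template (Deuring, Shimura) adapted to the $\OO$-oriented supersingular setting. There are three ingredients to verify: that the map $(\mathfrak{a},(E,\iota))\mapsto(E/E[\mathfrak{a}],\pi_{\mathfrak{a},*}\iota)$ defines a group action of $\Cl(\OO)$, that it is faithful, and that it is transitive.

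For well-definedness I would fix a representative $\mathfrak{a}$ of $[\mathfrak{a}]\in\Cl(\OO)$ coprime to the conductor $[\OO_K:\OO]$. Then $E[\mathfrak{a}]=\bigcap_{\alpha\in\mathfrak{a}}\ker\iota(\alpha)$ is a finite subgroup of order equal to the norm of $\mathfrak{a}$, and the quotient $\pi_\mathfrak{a}:E\to E/E[\mathfrak{a}]$ is $K$-oriented by the pushforward $\pi_{\mathfrak{a},*}\iota$. The local classification of $\OO$-oriented $\ell$-isogenies recalled in the excerpt shows that at each prime $\ell$ dividing the norm of $\mathfrak{a}$, invertibility of $\mathfrak{a}_\ell$ forces $\pi_\mathfrak{a}$ to be horizontal at $\ell$, so $\pi_{\mathfrak{a},*}\iota$ is again a primitive $\OO$-orientation. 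The multiplicativity axiom follows from the identity $E[\mathfrak{a}\mathfrak{b}]=\pi_\mathfrak{a}^{-1}\bigl((E/E[\mathfrak{a}])[\mathfrak{b}]\bigr)$, and descent to $\Cl(\OO)$ follows because for $\mathfrak{a}=(\alpha)$ principal, $\iota(\alpha)$ itself realizes an $\OO$-oriented isomorphism $E/E[(\alpha)]\cong E$. Faithfulness (in fact freeness) then follows from primitivity: if $[\mathfrak{a}]\cdot(E,\iota)\cong(E,\iota)$ then the composite $E\xrightarrow{\pi_\mathfrak{a}}E/E[\mathfrak{a}]\xrightarrow{\sim}E$ is a $K$-oriented endomorphism of $(E,\iota)$ of degree $N(\mathfrak{a})$, and since by definition $\End((E,\iota))=\iota(\OO)$, this composite equals $\iota(\alpha)$ for some $\alpha\in\OO$; comparing kernels gives $\mathfrak{a}=(\alpha)$.

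The main obstacle is transitivity. Given two primitively $\OO$-oriented curves $(E,\iota_E)$ and $(F,\iota_F)$, the strategy is to produce a $K$-oriented isogeny $\phi:E\to F$ (i.e.~with $\phi_*\iota_E=\iota_F$) and then extract the connecting ideal as $\mathfrak{a}_\phi=\{\alpha\in\OO:\iota_E(\alpha)(\ker\phi)=0\}$, which will satisfy $E[\mathfrak{a}_\phi]=\ker\phi$ and hence $[\mathfrak{a}_\phi]\cdot(E,\iota_E)\cong(F,\iota_F)$. The existence of $\phi$ is the delicate point: connectedness of the underlying non-oriented supersingular graph gives some isogeny $\phi_0:E\to F$, and its pushforward $\phi_{0,*}\iota_E$ is again a primitive $\OO$-orientation on $F$, but a priori distinct from $\iota_F$. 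I would reconcile the two by arguing that any two primitive $\OO$-orientations on the same supersingular $F$ differ at most by the non-trivial Galois involution of $K$, and that one can always correct $\phi_0$ either by post-composition with a horizontal cycle at $F$ in the first case, or, in the conjugate case, by replacing $\phi_0$ with its dual composed with a chain realizing the conjugate orientation. Invertibility of $\mathfrak{a}_\phi$ follows from $K$-orientedness of $\phi$, which forces $\ker\phi$ to be locally a rank-one $\OO$-module at every prime dividing its degree, hence $\mathfrak{a}_\phi$ a proper (and therefore invertible, $\OO$ being a Gorenstein order) $\OO$-ideal coprime to the conductor.
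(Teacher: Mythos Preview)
The paper does not actually prove this theorem; it is stated as a ``classical result, extending the standard result for CM elliptic curves,'' with an implicit appeal to Deuring's reduction/lifting theory. Your treatment of well-definedness and of faithfulness (freeness) follows the standard template and is sound.

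Your transitivity argument, however, has a genuine gap. Starting from an arbitrary isogeny $\phi_0:E\to F$ furnished by connectedness of the non-oriented supersingular graph, you assert that $\phi_{0,*}\iota_E$ is again a primitive $\OO$-orientation on $F$; this is not justified, since an arbitrary $\phi_0$ will have ascending or descending components at the primes dividing its degree, so the induced orientation is primitive for some order $\OO'\ne\OO$ in general. Even setting that aside, your key claim---that any two primitive $\OO$-orientations on the same supersingular $F$ differ at most by the Galois involution of $K$---is false once $|\disc(\OO)|$ exceeds $p$. The paper's own Proposition on the forgetful map only yields injectivity of $\SS_\OO(p)\to\SS(p)$ when $|\disc(\OO)|<p$, and the tables in Figures~\ref{figure:forgetful10} and~\ref{figure:forgetful12} exhibit fibres of growing size beyond that threshold; equivalently, a fixed maximal quaternion order admits many optimal embeddings of $\OO$, not merely a conjugate pair. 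Finally, the proposed correction ``by post-composition with a horizontal cycle at $F$'' presupposes that the $\Cl(\OO)$-action already links any two primitive $\OO$-orientations on $F$, which is precisely the transitivity you are trying to establish.

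The route the paper tacitly relies on is Deuring lifting: every primitively $\OO$-oriented supersingular curve over $\overline{\FF}_p$ is the reduction of a unique (up to isomorphism) elliptic curve over $\overline{\QQ}$ with endomorphism ring exactly $\OO$, compatibly with the $\Cl(\OO)$-action on both sides; transitivity in characteristic zero is classical CM theory and transports down. Equivalently, after your freeness argument, transitivity follows from the counting identity $|\SS_\OO^{pr}(p)|=h(\OO)$, itself a consequence of Deuring's correspondence.
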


\noindent
In particular, for fixed primitive $\OO$-oriented $E$, we hence obtain a
bijection of sets:
\begin{center}
\begin{tikzpicture}
\node[anchor=east] at (0.25,0.66) {$\Cl(\OO)$};
\node[anchor=west] at (2,0.66) {$\mathrm{SS}_\OO^{pr}(p)$};
\draw[->](0.35,0.66)--(1.90,0.66);
\node[anchor=east] at (0,0) {$\left[\aa\right]$};
\node[anchor=west] at (2,0) {$\left[\aa\right] \cdot E$};
\draw[|->](0.10,0)--(1.90,0);
\end{tikzpicture}
\end{center}
For any ideal class $[\mathfrak{a}]$ and generating set $\{\qq_1,\dots,\qq_r\}$
of small primes, coprime to $[\OO_K:\OO]$, we can find an identity $[\mathfrak{a}]
= [\qq_1^{e_1}\cdot\ldots\cdot\qq_r^{e_r}]$, in order to compute the action via
a sequence of low-degree isogenies.

For an ordinary $\ell$-isogeny isogeny graph $\Gamma_\ell(E)$, the points
defined over $\FF_{p^n}$ are determined by the condition $\ZZ[\pi^n]
\subseteq \End(E)$.  Since the class numbers of orders $\OO$ in $K$ are
unbounded, the previous theorem implies that the oriented supersingular
graphs are infinite.  While all supersingular curves and isogenies can be
defined over $\FF_{p^2}$, we can use the inclusion of an order $\OO
\subset \End(E)$ to restrict to a finite subgraph.

\begin{corollary}
Let $(E,\iota)$ be a $K$-oriented elliptic curve. The $\ell$-isogeny graph
$\Gamma_\ell(E,\iota)$ is an infinite graph which is the union of the finite
subgraphs whose vertices are restricted to $\mathrm{SS}_{\OO}(p)$ for an
order $\OO$ in $K$.
\end{corollary}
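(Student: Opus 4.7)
My plan is to split the claim into two independent parts: a decomposition of $\Gamma_\ell(E,\iota)$ as a union of finite subgraphs indexed by the orders of $K$, and an infiniteness statement via descending isogeny chains.

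For the decomposition, I would first observe that every vertex $(F,\iota_F)$ of $\Gamma_\ell(E,\iota)$ has a well-defined primitive order $\OO_F = \End((F,\iota_F)) \subseteq \OO_K$, so that $(F,\iota_F)$ belongs to $\mathrm{SS}_{\OO}(p)$ for every order $\OO \subseteq \OO_F$. Consequently the full subgraph on $\mathrm{SS}_{\OO}(p)$ stratifies as
$$
\mathrm{SS}_{\OO}(p) = \bigsqcup_{\OO \subseteq \OO' \subseteq \OO_K} \mathrm{SS}_{\OO'}^{pr}(p).
$$
The outer union is finite because only finitely many orders lie between $\OO$ and $\OO_K$ (they correspond to divisors of the conductor $[\OO_K:\OO]$), and each $\mathrm{SS}_{\OO'}^{pr}(p)$ is finite by Theorem~\ref{theorem:oriented-action}, being a torsor under the finite group $\Cl(\OO')$. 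Hence each $\mathrm{SS}_{\OO}(p)$ defines a finite subgraph. Conversely, any edge $(F,\iota_F) \to (F',\iota_{F'})$ of $\Gamma_\ell(E,\iota)$ is contained in the subgraph for any $\OO \subseteq \OO_F \cap \OO_{F'}$, so the union of these finite subgraphs as $\OO$ ranges over all orders in $K$ recovers $\Gamma_\ell(E,\iota)$.

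For infiniteness, I would exhibit an infinite descending $\ell$-isogeny chain starting from $(E,\iota)$. At every $\OO$-oriented vertex of discriminant $\DD$, the classification of $\ell$-isogenies given earlier in the section provides exactly $\ell - \left(\frac{\DD}{\ell}\right) \geq \ell - 1 \geq 1$ descending isogenies, so such a chain can always be extended. If $(E_i,\iota_i)$ is the resulting chain, its primitive orders $\OO_i$ satisfy $[\OO_{i}:\OO_{i+1}] = \ell$, and in particular the $\OO_i$ are pairwise distinct orders in $K$. Since $\End((E_i,\iota_i)) \not\cong \End((E_j,\iota_j))$ for $i \ne j$, the $K$-oriented isomorphism classes of the $E_i$ are pairwise distinct, producing infinitely many vertices.

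The only mildly delicate point is the finiteness of $\mathrm{SS}_{\OO'}^{pr}(p)$: Theorem~\ref{theorem:oriented-action} asserts that $\Cl(\OO')$ acts faithfully and transitively on it, so finiteness follows once the set is nonempty, and it is trivially empty or a single $\Cl(\OO')$-orbit; either way it is finite. Everything else is a bookkeeping argument on orders in $K$, which I expect to be routine.
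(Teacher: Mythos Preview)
Your proof is correct. The paper treats this corollary as essentially immediate and gives no formal proof; the only argument offered is the sentence preceding it: ``Since the class numbers of orders $\OO$ in $K$ are unbounded, the previous theorem implies that the oriented supersingular graphs are infinite.'' Your decomposition and finiteness arguments match this implicit reasoning exactly---each $\mathrm{SS}_{\OO}(p)$ is a finite union of the sets $\mathrm{SS}_{\OO'}^{pr}(p)$, each of which is finite by Theorem~\ref{theorem:oriented-action}.

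For infiniteness, however, you take a slightly different route. Rather than invoking the unbounded growth of $h(\OO_n)$ together with the bijection $\Cl(\OO_n) \cong \mathrm{SS}_{\OO_n}^{pr}(p)$, you construct an explicit infinite descending $\ell$-isogeny chain using the count $\ell - \left(\frac{\DD}{\ell}\right) \ge \ell - 1 \ge 1$ of descending isogenies at every vertex. This is more self-contained---it does not rely on the transitive action or on the class number formula---and it has the pleasant side effect of producing infinitely many vertices inside the connected component of $(E,\iota)$ directly, whereas the paper's one-line argument via class numbers leaves the connectivity question implicit.
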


The subrings $\OO_n = \ZZ + \ell^n\OO$ are a linearly ordered family which
serve to bound the depth of $K$-oriented curves relative to a curve at the
surface with orientation by an $\ell$-maximal order $\OO$.

\subsection*{On vortices and whirlpools}

Instead of considering the union of different isogeny graphs as in Couveignes~\cite{C2006}
and Rostovtsev-Stolbunov~\cite{RS2006}, we focus on a fixed prime $\ell$ and we think
of the other primes as acting on the $\ell$-isogeny graph. The resulting object is
the union of $\ell$-isogeny volcanoes mixing under the action of $\Cl(\OO)$.
This action stabilizes the subgraph at the surface (the craters) and preserves
descending paths.
This view is consistent with the construction of orientations by $\ell$-isogeny chains
(paths in the $\ell$-isogeny graph) anchored at the surface, with action of the class
group determined by ladders.

\begin{definition}
A {\it vortex} is defined to be an $\ell$-isogeny subgraph whose vertices
are isomorphism classes of $\OO$-oriented elliptic curves with $\ell$-maximal
endomorphism ring, equipped with the action of $\Cl(\OO)$.
A {\it whirlpool} is defined to be a complete $\ell$-isogeny graph of
$K$-oriented elliptic curves whose subgraphs of $\OO_n$-oriented classes
are acted on by $\Cl(\OO_n)$.
\end{definition}
\begin{figure}[ht!]
	\centering
	\includegraphics[height=3.5cm]{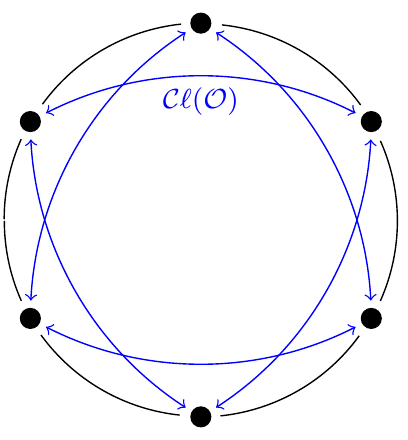}
\caption{A {\it vortex} consists of $\ell$-isogeny cycles at the surface
acted on by the class group $\Cl(\OO)$ of an $\ell$-maximal order $\OO$.}
\end{figure}

\begin{figure}[ht]
\centering
\includegraphics[width=0.44\textwidth]{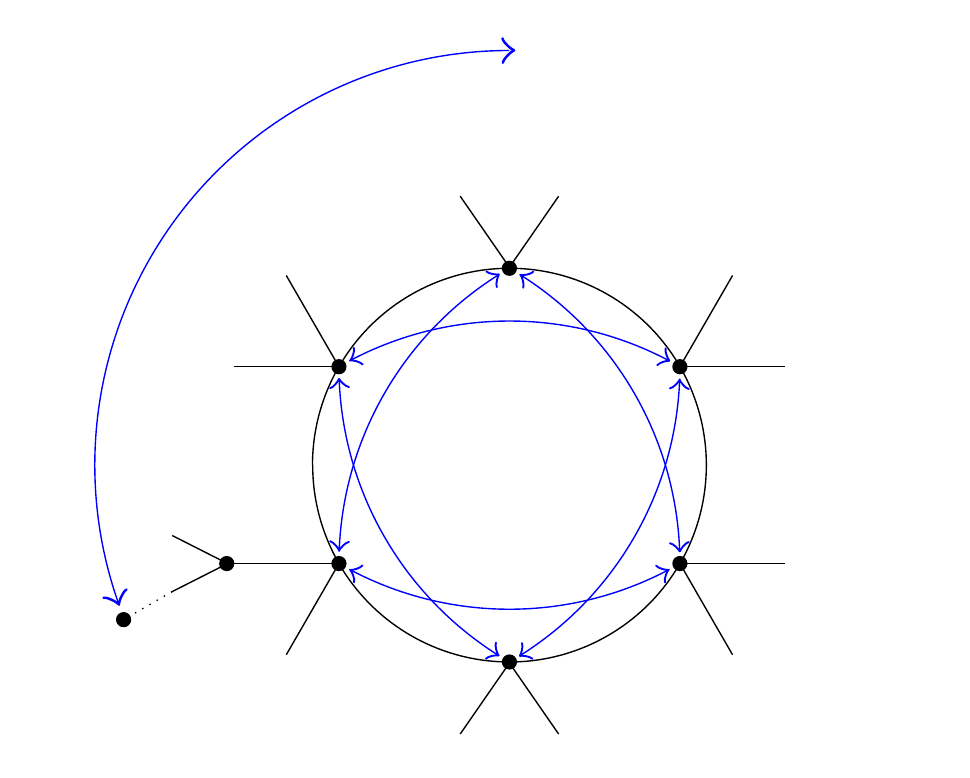}\\
\caption{A {\it whirlpool} is an $\ell$-isogeny graph equipped with compatible
actions on its subgraphs by $\Cl(\OO_n)$.  The depicted $4$-regular graph arises
from $\ell = 3$, and the cycle length is the order of a prime over $\ell$ in
the $\ell$-maximal order.}
\end{figure}

The underlying graph of a whirlpool is composed of multiple connected
components, with the class group acting transitively on components
with the same $\ell$-maximal order of its vortex.
The existence of multiple components of $\ell$-volcanoes is studied in
\cite{MSTTV2007} and \cite{FM2002}, where the set of $\ell$-volcanoes
is called an $\ell$-cordillera.
A general whirlpool can be depicted as in Figure~\ref{Figure:MultipleWhirlpool},
as an $\ell$-cordillera (black lines) acted on by the class group, as
represented by colored arrows.
\begin{figure}[H]
\centering
\includegraphics[width=0.5\textwidth]{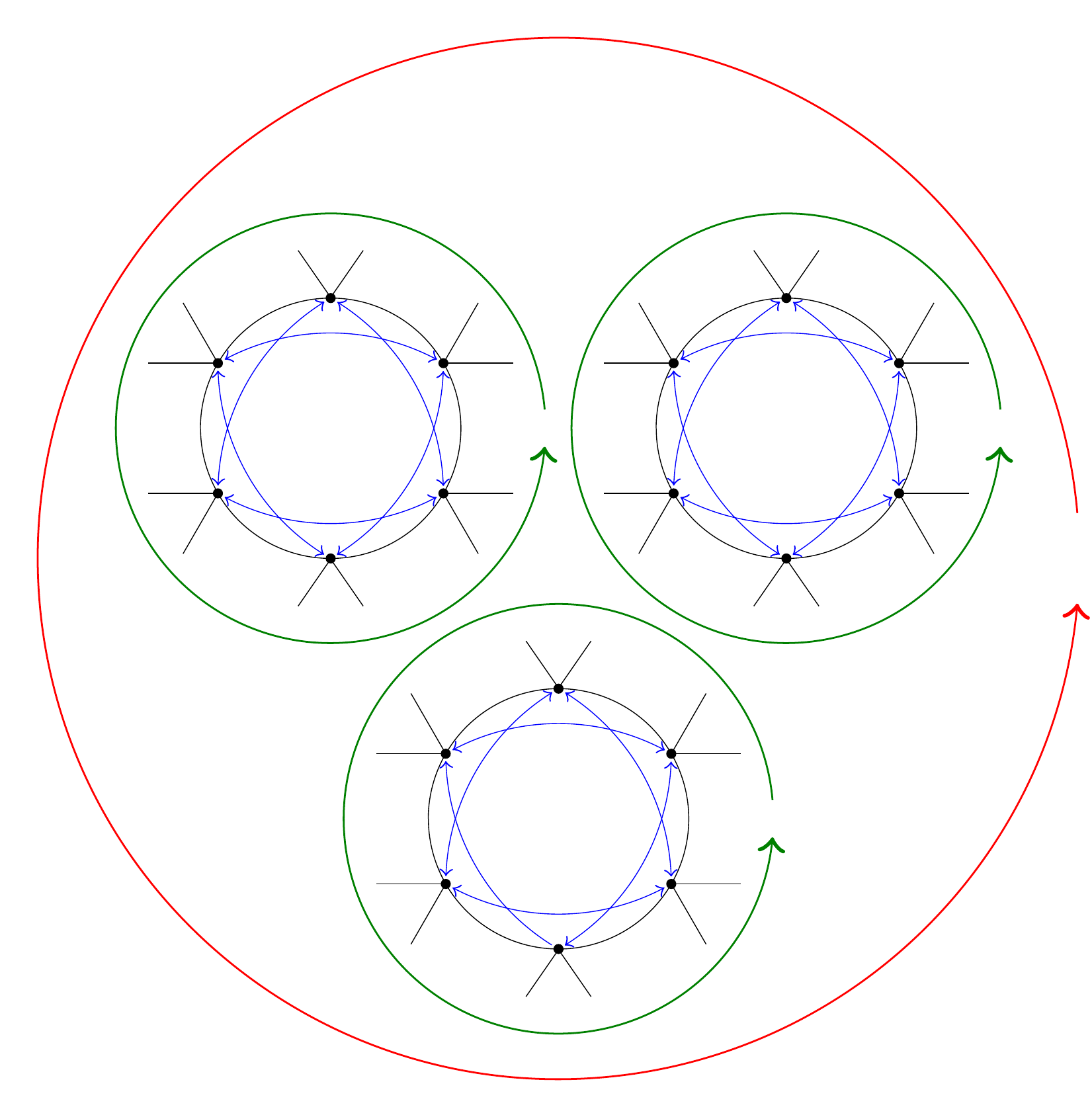}
\caption{An $\ell$-isogeny graph of a whirlpool may have multiple components.
The action depicts the subgraph acted on by a class group $\Cl(\OO)$ of order
$18$, in which $\ell = 3$ has order six, such as for discriminants $-1691$,
$-2291$, and $-2747$.}
\label{Figure:MultipleWhirlpool}
\end{figure}

\subsection*{Whirlpool examples}

We give examples of both ordinary and supersingular whirlpool structures
of $\ell$-isogeny graphs with induced class group actions.

\begin{example}
Let $E/\FF_{353}$ be a ordinary elliptic curve with $344$ rational points,
and consider the subgraph of $\Gamma_2(E)$ of curves defined over $\FF_{353}$.
The ring $\ZZ[\pi]$ generated by Frobenius $\pi$ has index~$2$ in the
maximal order $\OO_K \isom \ZZ[\sqrt{-82}]$ of class number~$4$.
The set of $j$-invariants of such curves at the surface is
$
\{160, 230, 270, 298\},
$
and the $j$-invariants of curves at depth~$1$ are
$
\{66, 182, 197, 236, 253, 264, 304, 330\}.
$

This graph, depicted in Figure~\ref{Figure:2cordillera}, consists
of two $2$-volcanoes, and hence the whirlpool consists of two components
permuted by the transitive action of $\Cl(\ZZ[\pi])$.
\begin{figure}[H] 
	\centering
	\includegraphics[height=2cm]{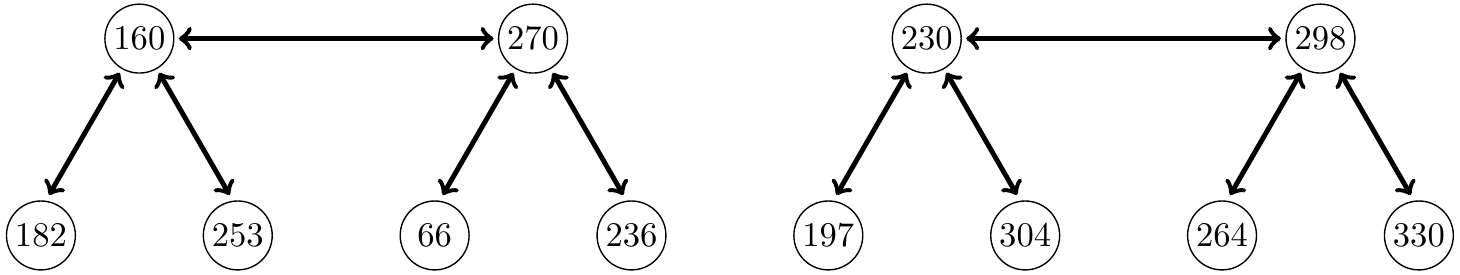}
	\caption{A $2$-cordillera.}\label{Figure:2cordillera}
	\vspace*{-0.2cm}
\end{figure}
Figure~\ref{Figure:2cordillerawhirlpool} represents the whirlpool, with blue lines
indicating the $7$-isogenies and red lines corresponding to the $13$-isogenies.

\begin{figure}[ht]
\centering
\includegraphics[width=\textwidth]{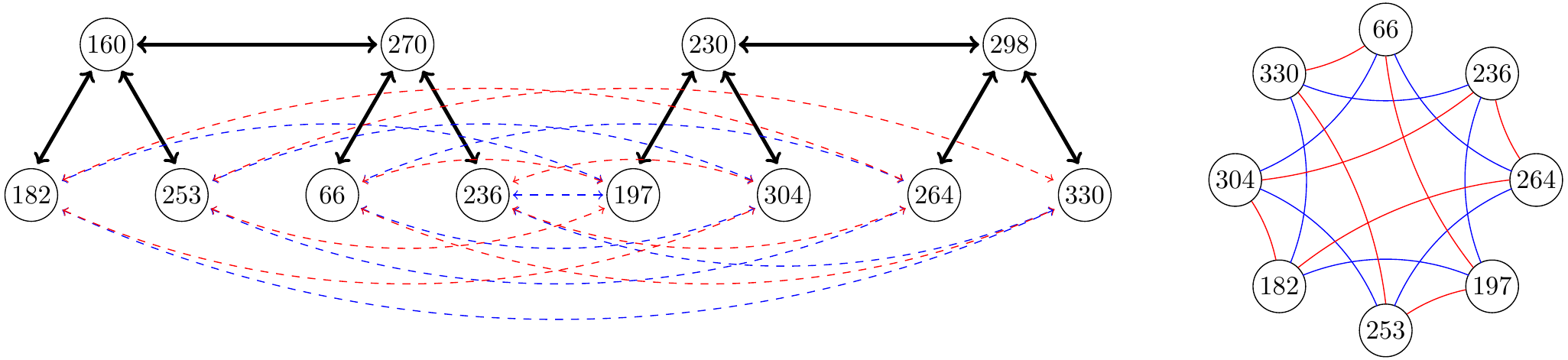}
\caption{A whirlpool with two components.}\label{Figure:2cordillerawhirlpool}
 \vspace*{-0.2cm}
\end{figure}
\end{example}

\begin{example}
Let $E_0/\FF_{71}$ be the supersingular elliptic curve with $j(E) = 0$,
oriented by the order $\OO_K = \ZZ[\omega]$, where $\omega^2 + \omega + 1 = 0$.
The unoriented $2$-isogeny graph is the finite graph:
\begin{center}
\includegraphics{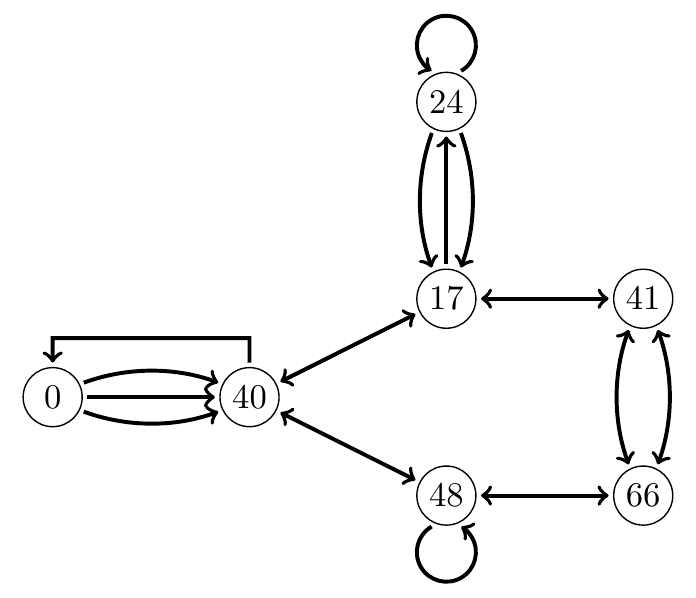}
\end{center}
The orietation by $K = \QQ[\omega]$ differentiates vertices in the descending
paths from $E_0$, determining an infinite graphy shown here to depth~$4$:
\begin{center}
\includegraphics{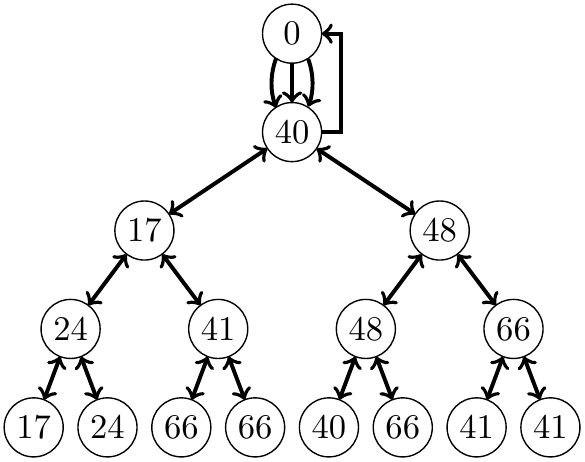}
\end{center}
Consider the descending path along vertex $j$-invariants $(0,40,17,41,66)$,
and let $\pp_7$ be a prime over the split prime $7$.  Since $\DD_K = -3$ and
$\DD_1 = \disc(\OO_1) = -12$ are of class number one, $\pp_7 \sim 1$, and the $7$-isogenous
chain is likewise of the form $(0,40,\dots)$.

At depth~$2$, the class number of $\OO_2$ of discriminant $-48$ is $2$,
and a Minkowski reduction of $\pp_7$ is an equivalent prime $\pp_3$ over $3$.
In particular, this prime is nonprincipal of order~$2$, so the image chain
extends $(0,40,48,\dots)$.

At depth~$3$, the class number of $\OO_3$ is $4$, and $\pp_7 \sim \bar{\pp}_7$
are primes of order~$2$ in the class group, hence the two $7$-isogenies are
to the same chain $(0,40,48,48,\dots)$.  Finally at depth~$4$ we
differentiate the two primes $\pp_7$ and $\bar{\pp}_7$ in $\OO_4$ each of
order~$4$. The two extensions $(0,40,48,48,66)$ and $(0,40,48,48,40)$, each
of which corresponds to one of the primes over $7$.  For a choice of prime
$\pp_7$ we have thus determined the following ladder inducing the action
of $\pp_7$ on the $\ell$-isogeny chain.

\begin{center}
\includegraphics{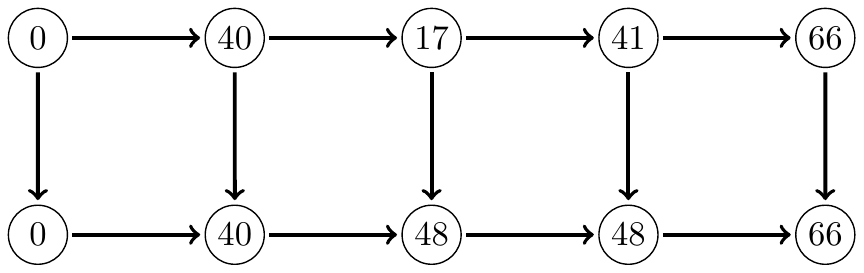}
\end{center}

\end{example}

\subsection*{The forgetful map to unoriented isogeny graphs}

In this section we address the extent of non-injectivity of the forgetful map
from oriented curves in the infinite oriented supersingular $\ell$-isogeny
graphs to the finite supersingular graph.

By Theorem~\ref{theorem:oriented-action}, we have a bijection (isomorphism
of sets with $\Cl(\OO)$-action):
$$
\Cl(\OO) \cong \SS_\OO^{pr}(\OO) \subseteq \SS_\OO(p)
$$
determined by any choice of base point.
On the other hand, for a descending chain of imaginary quadratic orders
of index $\ell$,
$$
\OO_K = \OO_0 \supset \OO_1 \supset \cdots \supset \OO_i \supset \cdots
$$
determined by a descending $\ell$-isogeny chain, the class numbers satisfy
the geometric growth $h(\OO_{i+1}) = \ell h(\OO_i)$ for all $i \ge 1$.
In particular, the inclusion $\OO_{i+1} \subset \OO_i$ determines an
inclusion $\SS_{\OO_i}(p) \subset \SS_{\OO_{i+1}}(p)
 = \SS_{\OO_i}(p) \cup \SS_{\OO_{i+1}}^{pr}(p)$.
Consequently we have an unbounded chain of sets
$$
\SS_{\OO_K}(p) \subset \SS_{\OO_1}(p) \subset \cdots \subset \SS_{\OO_i}(p) \subset \cdots
$$
equipped with forgetful maps $\SS_{\OO_i}(p) \rightarrow \SS(p)$
sending the $\OO_i$-isomorphism class $[(E,\OO_i)]$ to the 
isomorphism class $[E]$ determined by the $j$-invariant $j(E)$.

This motivates the questions of when the map $\SS_{\OO_i}(p) \rightarrow \SS(p)$
and its restriction to $\SS_{\OO_i}^{pr}(p)$ are injective, and when these
maps are surjective.
We adopt the notation $H(p)$ for the cardinality $|\SS(p)|$ of supersingular curves,
denote by $X_i$ the image of $\SS_{\OO_i}(p)$ in $\SS(p)$ and write $Y_i$ for the image of $\SS_{\OO_i}^{pr}(p)$.
Moreover we  write $\lambda_i = \log_p(|\DD_i|)$ where $\DD_i = \ell^{2i}\DD_K
= \disc(\OO_i)$.  With this notation Figure~\ref{figure:forgetful10}
and Figure~\ref{figure:forgetful12} give tables of values for $|Y_i|$, $|X_i|$,
and $\lambda_i$, for primes of $10$ and $12$ bits respectively, depicting the
boundary line for injectivity at $\lambda_i = 1$ and the critical line for surjectivity
at $\lambda_i = 2$. We conclude this section with a general proposition,
which follows from the following algebraic lemma, in order to justify the
injectivity bound.

\begin{lemma}
Let $\alpha_1$ and $\alpha_2$ be elements of a maximal quaternion order
in a quaternion algebra over $\QQ$ ramified at a prime $p$.
Set $\DD_i = \disc(\ZZ[\alpha_i])$ for $i \in \{1,2\}$,
and define $\omega$ to be the commutator
$
[\alpha_1,\alpha_2] = \alpha_1\alpha_2 - \alpha_2\alpha_1.
$
Then $\omega$ satisfies $\Tr(\omega) = 0$,
$
\Nr(\omega) = (\DD_1\DD_2-T^2)/4 \mbox{ where }
T = 2\Tr(\alpha_1\alpha_2) - \Tr(\alpha_1)\Tr(\alpha_2),
$
and $\Nr(\omega) \equiv 0 \bmod p$.
\end{lemma}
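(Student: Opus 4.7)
The trace claim is immediate by cyclicity of the reduced trace: $\Tr(\omega) = \Tr(\alpha_1\alpha_2) - \Tr(\alpha_2\alpha_1) = 0$. My plan for the norm formula is to reduce to the traceless parts $\beta_i = \alpha_i - \Tr(\alpha_i)/2$, for which $\Tr(\beta_i)=0$, $\Nr(\beta_i) = -\DD_i/4$, and the commutator is unchanged: $\omega = [\beta_1,\beta_2]$.

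Since pure quaternions square to minus their norm, $\beta_i^2 = \DD_i/4$. Expanding $\Nr(\beta_1+\beta_2)$ in two ways and using $\bar\beta_i=-\beta_i$ shows that the anticommutator is a scalar, namely
\[
\beta_1\beta_2 + \beta_2\beta_1 = \Tr(\beta_1\beta_2) =: s.
\]
Together with $\omega = \beta_1\beta_2 - \beta_2\beta_1$, this gives the factorizations $\beta_1\beta_2 = (s+\omega)/2$ and $\beta_2\beta_1 = (s-\omega)/2$. Multiplying,
\[
\frac{s^2-\omega^2}{4} \;=\; (\beta_1\beta_2)(\beta_2\beta_1) \;=\; \beta_1\,\beta_2^2\,\beta_1 \;=\; \frac{\DD_2}{4}\,\beta_1^2 \;=\; \frac{\DD_1\DD_2}{16}.
\]
Hence $\omega^2 = s^2 - \DD_1\DD_2/4$, and since $\Tr(\omega)=0$ forces $\Nr(\omega)=-\omega^2$, I obtain $\Nr(\omega) = (\DD_1\DD_2 - 4s^2)/4$. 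A routine expansion of $\beta_1\beta_2$ in terms of $\alpha_1,\alpha_2$ yields $2s = 2\Tr(\alpha_1\alpha_2) - \Tr(\alpha_1)\Tr(\alpha_2) = T$, matching the stated formula.

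For the divisibility $p \mid \Nr(\omega)$, my plan is to argue locally at $p$. Since the quaternion algebra $B$ is ramified at $p$, the localization $B_p = B \otimes_\QQ \QQ_p$ is a central division algebra of dimension $4$ over $\QQ_p$, and any maximal $\ZZ$-order localizes to the unique maximal $\ZZ_p$-order $\cO_p \subset B_p$. This order has a unique two-sided maximal ideal $\mathfrak{P}$ with $\mathfrak{P}^2 = p\cO_p$ and residue field $\cO_p/\mathfrak{P} \cong \FF_{p^2}$. Because this residue field is commutative, every commutator of elements of $\cO_p$ lies in $\mathfrak{P}$; in particular $\omega \in \mathfrak{P}$. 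The reduced norm maps $\mathfrak{P}$ into $p\ZZ_p$ (a uniformizer of $\mathfrak{P}$ has reduced norm equal to $p$ up to a unit), so $\Nr(\omega) \in p\ZZ_p \cap \ZZ = p\ZZ$.

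The purely quaternionic identity in the second paragraph is the computational step, but it is a clean manipulation of the two-variable Clifford relation. The conceptually important step is the final one: invoking the local division-algebra structure at the ramified prime and the commutativity of its residue field. I expect this to be the main obstacle only insofar as it requires the reader to accept the standard local theory of maximal orders in ramified quaternion algebras; once that is in hand, both $\omega \in \mathfrak{P}$ and $\Nr(\mathfrak{P}) \subseteq p\ZZ_p$ are immediate.
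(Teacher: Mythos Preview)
Your proof is correct and follows essentially the same approach as the paper. The paper dismisses the norm formula as ``an elementary calculation'' without details, so your reduction to the traceless parts $\beta_i$ and the Clifford-style identity $(\beta_1\beta_2)(\beta_2\beta_1)=\DD_1\DD_2/16$ is a welcome fleshing-out; for the congruence $\Nr(\omega)\equiv 0\bmod p$ the paper argues exactly as you do---commutativity of the residue field $\FF_{p^2}$ forces $\omega\in\mathfrak{P}$, hence $p\mid\Nr(\omega)$---only phrasing it globally with the maximal ideal over $p$ rather than via the local order $\cO_p$.
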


\begin{proof}
The equality $\Tr(\omega) = 0$ follows from the relation $\Tr(\alpha_1\alpha_2)
= \Tr(\alpha_2\alpha_1)$ and linearity of the reduced trace. The expression
for the reduced norm $\Nr(\omega)$ is an elementary calculation. The congruence
$\Nr(\omega) = 0 \bmod p$ holds since the unique maximal ideal $\mathfrak{P}$
over $p$ in the quaternion order is the subset of elements $\alpha$ with 
$\Nr(\alpha) \equiv 0 \bmod p$, and the quotient by $\mathfrak{P}$ is isomorphic
to the (commutative) finite field $\FF_{p^2}$.
Hence $\alpha_1\alpha_2 \equiv \alpha_2\alpha_1 \bmod \mathfrak{P}$ which 
implies $\omega \bmod \mathfrak{P} = 0$, from which $\Nr(\omega) \equiv 0 \bmod p$
holds.
\end{proof}

\begin{proposition}
Let $\OO$ be an imaginary quadratic order of discrminant $\DD$ and $p$ a prime which
is inert in $\OO$.  If $|\DD| < p$, then the map $\SS_\OO(p) \rightarrow \SS(p)$ is
injective.
\end{proposition}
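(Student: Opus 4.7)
The plan is to argue by contradiction: suppose $[(E_1, \iota_1)], [(E_2, \iota_2)] \in \SS_\OO(p)$ are distinct $K$-isomorphism classes mapping to the same $j$-invariant in $\SS(p)$. Transporting $\iota_2$ along an isomorphism $E_2 \to E_1$, I may assume both orientations $\iota_1, \iota_2 \colon \OO \hookrightarrow \End(E)$ live on a common curve $E$. Pick $\theta \in \OO$ with $\OO = \ZZ[\theta]$ and minimal polynomial $x^2 - tx + n$ of discriminant $\DD$, and set $\alpha_i = \iota_i(\theta) \in \End(E)$; both $\alpha_i$ satisfy this same minimal polynomial. I then exploit the hypothesis that $p$ is inert in $\OO$ by reducing modulo the unique two-sided maximal ideal $\mathfrak{P}$ of $\End(E)$ above $p$: as recorded in the proof of the preceding lemma, $\End(E)/\mathfrak{P} \cong \FF_{p^2}$, and since $p$ is inert in $\OO$, the polynomial $x^2 - tx + n$ is irreducible over $\FF_p$ and has exactly two Frobenius-conjugate roots in $\FF_{p^2}$. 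Both reductions $\bar\alpha_1, \bar\alpha_2$ lie in this two-element set, which yields either Case (A) $\bar\alpha_1 = \bar\alpha_2$, in which $\phi := \alpha_1 - \alpha_2 \in \mathfrak{P}$, or Case (B) $\bar\alpha_1 + \bar\alpha_2 \equiv t \pmod p$, in which $\phi' := \alpha_1 + \alpha_2 - t \in \mathfrak{P}$.

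\textbf{Norm estimates and the role of the lemma.} Passing to the trace-zero parts $\beta_i = 2\alpha_i - t$, which satisfy $\beta_i^2 = \DD$, a routine expansion gives
$$
\Nr(\phi) = \frac{T - \DD}{2}, \qquad \Nr(\phi') = -\frac{T + \DD}{2},
$$
where $T = 2\Tr(\alpha_1\alpha_2) - t^2$ coincides with the quantity appearing in the preceding lemma. Applied to $\alpha_1, \alpha_2$, that lemma yields $\Nr([\alpha_1,\alpha_2]) = (\DD^2 - T^2)/4 \ge 0$, and therefore the crucial bound $|T| \le |\DD|$. Substituting this bound into the two displayed identities gives $\Nr(\phi) \le |\DD|$ in Case (A) and $\Nr(\phi') \le |\DD|$ in Case (B).

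\textbf{Closing the contradiction.} In either case, if the element in $\mathfrak{P}$ were nonzero, then positive definiteness of $\Nr$ on the quaternion algebra together with $\mathfrak{P}$-membership would force its reduced norm to be a positive multiple of $p$, hence at least $p$. Combined with the bound $\le |\DD|$ from the previous step, this contradicts the hypothesis $|\DD| < p$. Therefore $\phi = 0$ (so $\alpha_2 = \alpha_1$ and $\iota_2 = \iota_1$) or $\phi' = 0$ (so $\alpha_2 = t - \alpha_1 = \bar\alpha_1$ and $\iota_2 = \iota_1 \circ c$); both possibilities yield a single $K$-isomorphism class in $\SS_\OO(p)$ under the natural action of $\Aut(K)$ on orientations, contradicting the assumed distinctness. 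The main obstacle is obtaining a \emph{linear}-in-$|\DD|$ upper bound on $\Nr(\phi)$ and $\Nr(\phi')$; this is where the preceding lemma intervenes decisively, because without the constraint $|T| \le |\DD|$ a naive norm estimate would only be quadratic in $|\DD|$ and the hypothesis $|\DD| < p$ would not close the argument.
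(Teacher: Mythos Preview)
Your argument is correct and reaches the same conclusion as the paper's, but is organized differently.  The paper works directly with the commutator $\omega=[\alpha_1,\alpha_2]$: the lemma gives $\Nr(\omega)=(\DD^2-T^2)/4$ with $p\mid\Nr(\omega)$; assuming $\omega\ne 0$ (so $\Nr(\omega)>0$, hence $|T|<|\DD|$), the factorization $\DD^2-T^2=(\DD-T)(\DD+T)$ forces $2p\le |\DD|+|T|\le 2|\DD|$, whence $p\le|\DD|$.  Your Case~(A)/Case~(B) split is the same factorization in disguise: expanding $(\alpha_1-\alpha_2)(\alpha_1+\alpha_2-t)$ using $\alpha_i^2-t\alpha_i+n=0$ gives $\phi\phi'=\omega$, so $\Nr(\omega)=\Nr(\phi)\Nr(\phi')$, and your reduction modulo $\mathfrak{P}$ via the inert hypothesis is a concrete way of deciding which factor carries the divisibility by~$p$.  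What your organization buys is an explicit treatment of the degenerate possibility $\omega=0$, i.e.\ $\alpha_2=\bar\alpha_1$, which the paper's proof suppresses by asserting $\Nr(\omega)>0$ without comment.  Your disposal of that case by invoking ``the natural action of $\Aut(K)$'' is the one soft point: under the paper's definition of $\SS_\OO(p)$ (classes up to $K$-oriented isomorphism, with no quotient by complex conjugation), the orientations $\iota_1$ and $\iota_1\circ c$ need not represent the same class, so strictly speaking both your proof and the paper's establish injectivity only up to conjugation of the orientation.  That is an ambiguity in the statement rather than a defect in your reasoning.
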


\begin{proof}
If the map is not injective, there exists a supersingular elliptic curve
$E/\overline{\FF}_p$, such that $\End(E)$ admits distinct embeddings
$\iota_i :\OO = \ZZ[\alpha] \rightarrow \End(E)$, for $i \in \{1,2\}$.
Let $\alpha_i = \iota_i(\alpha)$ and set $\omega = [\alpha_1,\alpha_2]$.
By the previous lemma, we have
$$
\Nr(\omega) = \frac{\DD^2-T^2}{4} \equiv 0 \bmod p.
$$
Since $p$ is prime, and $T\equiv \DD \bmod 2$, we have either
$|\DD|-|T| \equiv 0 \bmod 2p$ or $|\DD|+|T| \equiv 0 \bmod 2p$.
Moreover, since $\End(E)$ is an order in a definite quaternion
algebra, we have $\Nr(\omega) > 0$, hence $|T| < |\DD|$.
It follows that 
$
2p \le |\DD| + |T| \le 2|\DD|,
$
and hence $p \le |\DD|$.
As a consequence, we conclude that if the map is injective, then $|\DD| < p$.
\end{proof}

\begin{figure}[ht!]
\centering
\begin{tabular}{*{6}{@{\,}c@{\,}|}}
\multicolumn{6}{c}{$p=1013$}\\
    $i$ & $h(O_i)$ & $|Y_i|$ & $|X_i|$ & $H(p)$ & $\lambda_i$\\ \hline
     1  &   1  &    1   &    1   &  85  & 0.3590 \\
     2  &   2  &    2   &    3   &  85  & 0.5593 \\
     3  &   4  &    4   &    7   &  85  & 0.7596 \\
     4  &   8  &    8   &   15   &  85  & 0.9599 \\ \hline
     5  &  16  &   16   &   29   &  85  & 1.1603 \\
     6  &  32  &   26   &   47   &  85  & 1.3606 \\
     7  &  64  &   43   &   66   &  85  & 1.5609 \\
     8  & 128  &   70   &   82   &  85  & 1.7612 \\
     9  & 256  &   79   &   85   &  85  & 1.9615 \\ \hline
    10  & 512  &   83   &   85   &  85  & 2.1618
\end{tabular}
\quad
\begin{tabular}{*{6}{@{\,}c@{\,}|}}
\multicolumn{6}{c}{$p=1019$}\\
    $i$ & $h(O_i)$ & $|Y_i|$ & $|X_i|$ & $H(p)$ & $\lambda_i$\\ \hline
     1  &   1  &    1   &    1   &  86  & 0.3587 \\
     2  &   2  &    2   &    3   &  86  & 0.5588 \\
     3  &   4  &    4   &    7   &  86  & 0.7590 \\
     4  &   8  &    8   &   15   &  86  & 0.9591 \\ \hline
     5  &  16  &   15   &   30   &  86  & 1.1593 \\
     6  &  32  &   29   &   49   &  86  & 1.3594 \\
     7  &  64  &   46   &   69   &  86  & 1.5595 \\
     8  & 128  &   64   &   81   &  86  & 1.7597 \\
     9  & 256  &   83   &   84   &  86  & 1.9598 \\ \hline
    10  & 512  &   86   &   86   &  86  & 2.1600
\end{tabular}
\caption{Sizes of images of oriented classes mapping to supersingular curves}
\label{figure:forgetful10}
\end{figure}

\begin{figure}[ht!]
\centering
\begin{tabular}{*{6}{@{\,}c@{\,}|}}
\multicolumn{6}{c}{$p=4079$}\\
    $i$ & $h(O_i)$ & $|Y_i|$ & $|X_i|$ & $H(p)$ & $\lambda_i$\\ \hline
     1  &   1  &    1   &    1   & 341  & 0.2988 \\
     2  &   2  &    2   &    3   & 341  & 0.4656 \\
     3  &   4  &    4   &    7   & 341  & 0.6323 \\
     4  &   8  &    8   &   15   & 341  & 0.7991 \\
     5  &  16  &   16   &   31   & 341  & 0.9658 \\ \hline
     6  &  32  &   31   &   62   & 341  & 1.1326 \\
     7  &  64  &   61   &  113   & 341  & 1.2993 \\
     8  & 128  &  111   &  196   & 341  & 1.4661 \\
     9  & 256  &  180   &  276   & 341  & 1.6328 \\
    10  & 512  &  258   &  326   & 341  & 1.7996 \\
    11  &1024  &  318   &  340   & 341  & 1.9663 \\ \hline
    12  &2048  &  340   &  341   & 341  & 2.1331
\end{tabular}
\quad
\begin{tabular}{*{6}{@{\,}c@{\,}|}}
\multicolumn{6}{c}{$p=4091$}\\
    $i$ & $h(O_i)$ & $|Y_i|$ & $|X_i|$ & $H(p)$ & $\lambda_i$\\ \hline
     1  &   1  &    1   &    1   & 342  & 0.2987 \\
     2  &   2  &    2   &    3   & 342  & 0.4654 \\
     3  &   4  &    4   &    7   & 342  & 0.6321 \\
     4  &   8  &    8   &   15   & 342  & 0.7988 \\
     5  &  16  &   16   &   31   & 342  & 0.9655 \\ \hline
     6  &  32  &   30   &   59   & 342  & 1.1322 \\
     7  &  64  &   59   &  110   & 342  & 1.2989 \\
     8  & 128  &  107   &  182   & 342  & 1.4656 \\
     9  & 256  &  186   &  263   & 342  & 1.6323 \\
    10  & 512  &  266   &  326   & 342  & 1.7990 \\
    11  &1024  &  314   &  341   & 342  & 1.9657 \\ \hline
    12  &2048  &  339   &  342   & 342  & 2.1323
\end{tabular}
\caption{Sizes of images of oriented classes mapping to supersingular curves}
\label{figure:forgetful12}
\end{figure}

\section{Modular isogenies}\label{section:Modularisogenies}

In this section we consider the way in which we effectively represent
and compute isogenies.  With the view to oriented isogenies, we focus
on horizontal isogenies with kernel $E[\qq]$, where $E$ is a primitive
$\OO$-oriented elliptic curve and $\qq$ a prime ideal of $\iota(\OO)$.
In what follows we suppress $\iota$ and identify $\OO$ with $\iota(\OO)$.

\subsection*{Effective endomorphism rings and isogenies}

We say a subring of $\End(E)$ is effective if we have explicit polynomial
or rational functions which represent its generators.
The subring $\ZZ$ in $\End(E)$ is thus effective.
Examples of effective imaginary quadratic subrings $\OO \subset \End(E)$, are the
subring $\OO = \ZZ[\pi]$ generated by Frobenius, for either an ordinary
elliptic curve, or a supersingular elliptic curve defined over $\FF_p$,
or an elliptic curve obtained by CM construction for an order $\OO$ of
small discriminant (in absolute value).

In the Couveignes~\cite{C2006} or the Rostovtsev-Stolbunov~\cite{RS2006}
constructions, or in the CSIDH protocol~\cite{CLMPR2018}, one works with
the ring $\OO = \ZZ[\pi]$.
The disadvantage is that for large finite fields, the class group of $\OO$
is large and the primes $\qq$ in $\OO$ have no small degree elements.
For large $p$ and small $q$, the smallest degree element of a prime $\qq$
of norm $q$ is the endomorphism $[q]$, of degree $q^2$.
The division polynomial $\psi_q(x)$, which cuts out the torsion group
$E[q]$, is of degree $(q^2-1)/2$. Consequently factoring $\psi_q(x)$ to
find the kernel polynomial (see Kohel~\cite[Chapter~2]{Kohel1996})
of degree $(q-1)/2$ for $E[\qq]$ is relatively expensive.
As a result, in the SIDH protocol~\cite{JDF2011}, the ordinary
protocol of De~Feo, Smith, and Kieffer~\cite{DFKS2018}, or the CSIDH
protocol~\cite{CLMPR2018}, the curves are chosen such that the points
of $E[\qq]$ are defined over a small degree extension $\kappa/k$,
particularly $[\kappa/k] \in \{1,2\}$, and working with rational
points in $E(\kappa)$.

In the OSIDH protocol outlined below, we propose the use of an effective
CM order $\OO_K$ of class number $1$.  In particular every prime $\qq$
of norm $q$ is generated by an endomorphism of the minimal degree $q$.
For example we may take $\OO_K$ to be the Eisenstein or Gaussian integers
of discriminant $-3$ or $-4$, generated by an automorphism.
The kernel polynomial of degree $(q-1)/2$ can be computed directly
without need for a splitting field for $E[\qq]$, and the computation
of a generator isogeny is a one-time precomputation. Using an analog
of the construction of division polynomials, the computation of the
kernel polynomial requires $O(q)$ field operations.

\subsection*{Push forward isogenies}
The extension of an isogeny (or, as we will see in the next section, of an endomorphism) of $E_0$ to an $\ell$-isogeny chain
$(E_i,\phi_i)$ reduces to the construction of a ladder.  At each step
we are given $\phi_i: E_{i} \rightarrow E_{i+1}$ and $\psi_i: E_{i}
\rightarrow F_{i}$ of coprime degrees, and need to compute
$$
\psi_{i+1}: E_{i+1} \rightarrow F_{i+1}
\mbox{ and }
\phi_{i}': F_{i} \rightarrow F_{i+1}.
$$
Rather than working with elliptic curves and isogenies, we construct
the oriented graphs directly as points on a modular curve linked by
modular correspondences defined by modular polynomials.

\subsection*{Modular curves and isogenies}

The use of modular curves for efficient computation of isogenies has
an established history (see Elkies~\cite{Elkies1998}).
For this purpose we represent isogeny chains and ladders as finite
sequences of points on the modular curve $\XX = X(1)$ preserving the
relations given by a modular equation.
\vspace{1mm}

We recall that the modular curve $X(1) \isom \PP^1$ classifies
elliptic curves up to isomorphism, and the function $j$ generates
its function field.  The family of elliptic curves
$$
E: y^2 + xy = x^3 - \frac{36}{(j - 1728)}x - \frac{1}{(j - 1728)}
$$
covers all isomorphism classes $j \ne 0, 12^3$ or $\infty$, such that
the fiber over $j_0 \in k$ is an elliptic curve of $j$-invariant $j_0$. The curves $y^2+y=x^3$ and $y^2=x^3+x$ deal with the cases $j=0$ and $j=1728$.

The modular polynomial $\Phi_m(X,Y)$ defines a correspondence in
$X(1) \times X(1)$ such that $\Phi_m(j(E),j(E')) = 0$ if and only
if there exists a cyclic $m$-isogeny $\phi$ 
from $E$ to $E'$, possibly over some extension field.
The curve in $X(1) \times X(1)$ cut out by $\Phi_m(X,Y) = 0$ is
a singular image of the modular curve $X_0(m)$ parametrizing such
pairs $(E,\phi)$. 
\vspace{1mm}

\noindent{\bf Remark.}
The modular curve $X(1)$ can be replaced by any genus $0$ modular
curve $\XX$ parametrizing elliptic curves with level structure.
Lifting the modular polynomials back to $\XX$ of higher level
(but still genus 0) has an advantage of reducing the coefficient
size of the corresponding modular polynomials $\Phi_m(X,Y)$.

In the case of CSIDH, the authors use $\XX = X_0(4)$, with a modular
function $a \in k(X_0(4))$ to parametrize the family of curves
$$
E: y^2 = x(x^2 + ax + 1),
$$
together with a cyclic subgroup $C \subset E$ of order~$4$, whose
generators are cut out by $x = 1$. The map $\XX \rightarrow X(1)$
is given by
$$
j = \frac{2^8(a^2-3)^3}{(a-2)(a+2)}\cdot
$$
The approach via modular isogenies of this section can be
adapted as well to the CSIDH protocol.

\ignore{
/*
In CSIDH the parameter A (of y^2 = (x^2 + A*x + 1)*x) determines a point on X_0(4).
*/
F<a1,a2> := FunctionField(ZZ,2);
Phi2 := ClassicalModularPolynomial(2);
Phi3 := ClassicalModularPolynomial(3);
Phi5 := ClassicalModularPolynomial(5);
// X_0(4):
j1 := 2^8*(X^2-3)^3/((X-2)^1*(X+2)^1);
j2 := 2^8*(Y^2-3)^3/((Y-2)^1*(Y+2)^1);
// X(2):
j1_prime := 2^4*(a1^2+12)^3/((a1-2)^2*(a1+2)^2);
j2_prime := 2^4*(a2^2+12)^3/((a2-2)^2*(a2+2)^2);
//
assert Phi2([j1,j1_prime]) eq 0;
assert Phi2([j2,j2_prime]) eq 0;
//
// Determine modular polynomials in X_0(4) x X_0(4):
//
Factorization(Phi2([j1,j2]));
Factorization(Phi3([j1,j2]));
Factorization(Phi5([j1,j2]));
}

\begin{definition}
A {\it modular $\ell$-isogeny chain} of length $n$ over $k$ is a finite sequence
$
(j_0,j_1,\dots,j_n)
$
in $k$ such that $\Phi_\ell(j_i,j_{i+1}) = 0$ for $0 \le i < n$.
A {\it modular $\ell$-ladder} of length $n$ and degree $q$ over $k$ is a pair
of modular $\ell$-isogeny chains
$$
(j_0,j_1,\dots,j_n) \mbox{ and } (j_0',j_1',\dots,j_n'),
$$
such that $\Phi_q(j_i,j_i') = 0$.
\end{definition}

Clearly an $\ell$-isogeny chain $(E_i,\phi_i)$ determines the modular
$\ell$-isogeny chain $(j_i = j(E_i))$, but the converse is equally true.

\begin{proposition}
If $(j_0,\dots,j_n)$ is a modular $\ell$-isogeny chain over $k$, and $E_0/k$
is an elliptic curve with $j(E_0) = j_0$, then there exists an $\ell$-isogeny
chain $(E_i,\phi_i)$ such that $j_i = j(E_i)$ for all $0 \le i \le n$.
\end{proposition}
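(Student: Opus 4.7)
The plan is to prove the proposition by induction on the length $n$ of the modular $\ell$-isogeny chain. The base case $n=0$ is immediate, since the hypothesis already provides $E_0/k$ with $j(E_0)=j_0$ and there is nothing else to construct.

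For the inductive step, suppose we have built an $\ell$-isogeny chain
\[
E_0 \xrightarrow{\phi_0} E_1 \xrightarrow{\phi_1} \cdots \xrightarrow{\phi_{n-1}} E_n
\]
with $j(E_i)=j_i$ for $0 \le i \le n$. I would then invoke the defining moduli property of the modular polynomial, recalled just above the proposition: $\Phi_\ell(j(E),j(E'))=0$ if and only if there exists a cyclic $\ell$-isogeny from $E$ to $E'$, possibly over an extension of the base field. Applied to $E=E_n$, the hypothesis $\Phi_\ell(j_n,j_{n+1})=0$ produces an elliptic curve $E_{n+1}$ with $j(E_{n+1})=j_{n+1}$ and a cyclic $\ell$-isogeny $\phi_n\colon E_n \to E_{n+1}$. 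Appending $(\phi_n,E_{n+1})$ to the existing chain completes the induction.

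Concretely, the curve $E_{n+1}$ can be exhibited as a quotient $E_n/G$ for some cyclic subgroup $G \subset E_n[\ell]$: there are $\ell+1$ such subgroups, and running through them produces the $\ell+1$ roots (with multiplicity) of $\Phi_\ell(j_n,Y)$; the hypothesis that $j_{n+1}$ is such a root picks out at least one choice of $G$, and hence of $\phi_n$. There is no uniqueness claim in the proposition — indeed the choice of $G$ over a root is only unique up to the action of $\mathrm{Aut}(E_{n+1})$ — so we do not need to track it, and in particular we need not worry about whether the constructed $E_{n+1}$ is $k$-rational or whether $\phi_n$ is defined over $k$ rather than over some extension.

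The main obstacle is essentially cosmetic: one must be careful at $j=0$ and $j=1728$, where extra automorphisms cause $\Phi_\ell(j_n,Y)$ to have repeated roots and the correspondence between cyclic subgroups and roots becomes many-to-one. This is harmless for our purpose, since we only need the existence of one cyclic $\ell$-isogeny per step, which is exactly what the moduli interpretation of $X_0(\ell)$ guarantees at every point of $X(1)$.
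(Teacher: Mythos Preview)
Your argument is correct: the induction on $n$ together with the moduli interpretation of $\Phi_\ell$ (stated in the paper just above the proposition) is exactly what is needed, and your handling of the special $j$-invariants and of the possible field extension is appropriate given that the proposition makes no rationality claim.

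As for comparison: the paper states this proposition without proof, treating it as an immediate consequence of the quoted property that $\Phi_\ell(j(E),j(E'))=0$ holds if and only if there is a cyclic $\ell$-isogeny $E\to E'$ (possibly over an extension). Your write-up is precisely the natural elaboration of that implicit one-line argument, so there is no divergence in approach to discuss.
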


Given any modular $\ell$-isogeny chain $(j_i)$, elliptic curve $E_0$ with
$j(E_0) = j_0$, and isogeny $\psi_0: E_0 \rightarrow F_0$, it follows that
 we can construct an $\ell$-ladder
$
\psi: (E_i,\phi_i) \rightarrow (F_i,\phi_i')
$
and hence a modular $\ell$-isogeny ladder.  In fact the $\ell$-ladder can
be efficiently constructed recursively from the modular $\ell$-isogeny
chain $(j_0,\dots,j_n)$ and $(j_0',\dots,j_n')$, by solving the system 
of equations
$$
\Phi_\ell(j'_i,Y)=\Phi_q(j_{i+1},Y)=0,
$$
for $Y = j_{i+1}'$.

\vspace{2mm}
\noindent{\bf Remark.}
The modular polynomial $\Phi_q(X,Y)$ is degree $q+1$ in $X$ and $Y$. 
The evaluation at $X = j \in \FF_{p^2}$ requires $O(q^2)$ field
multiplications. The subsequent gcd requires $O(\ell q)$ operations,
and these operations are repeated to depth $n$.

\section{OSIDH}\label{section:OSIDH}
We consider an elliptic curve $E_0/k$ ($k=\FF_{p^2}$) with an $\OO_K$-orientation
by an effective ring $\OO_K$ of class number~$1$, e.g.~$j = 0$ or $j = 12^3$ (for which $\OO_K=\ZZ[\zeta_3]$ or $\ZZ[i]$),
small prime $\ell$, and a descending $\ell$-isogeny chain from $E_0$ to $E = E_n$.
The $\OO_K$-orientation on $E_0$ and $\ell$-isogeny chain induces isomorphisms
$$
\iota_i : \ZZ + \ell^i\OO_K \rightarrow \OO_i \subset \End(E_i),
$$
and we set $\OO = \OO_n$.
By hypothesis on $E_0/k$ (the class number of $\OO_K$ is $1$), any horizontal isogeny $\psi_0: E_0 \rightarrow F_0$
is, up to isomorphism $F_0 \isom E_0$, an endomorphism.

For a small prime $q$, we push forward a $q$-endomorphism $\phi_0 \in \End(E_0)$,
to a $q$-isogeny $\psi : (E_i,\phi_i) \rightarrow (F_i,\phi_i')$.
\begin{center}
\begin{tikzpicture}

\draw[fill=black] (0,0.75*\sqt) circle [radius=0.1] 
(1.7*\sqt/2,0.75*\sqt+1.7/2) circle [radius=0.1] 
(1.7*\sqt,0.75*\sqt+1.7) circle [radius=0.1] 
(2*1.7*\sqt,0.75*\sqt+2*1.7) circle [radius=0.1]; 

\node at (0.15,0.95*\sqt) {$E_0$};
\node at (1.7*\sqt/2,0.75*\sqt+1.7/2+0.3) {$E_1$};
\node at (1.7*\sqt,0.75*\sqt+1.7+0.3) {$E_2$};
\node at (2*1.7*\sqt,0.75*\sqt+2*1.7+0.3) {$E_n$};

\draw[->](0.1*\sqt,0.75*\sqt+0.1)--(1.5*\sqt/2,0.75*\sqt+1.5/2);
\draw[->](1.9*\sqt/2,0.75*\sqt+1.9/2)--(1.6*\sqt,0.75*\sqt+1.6);	\draw[->](1.8*\sqt,0.75*\sqt+1.8)--(3*1.7*\sqt/2-0.1*\sqt,0.75*\sqt+3*1.7/2-0.1);
\draw[->](3*1.7*\sqt/2+0.1*\sqt,0.75*\sqt+3*1.7/2+0.1)--(2*1.7*\sqt-0.1*\sqt,0.75*\sqt+2*1.7-0.1);

\draw[dotted](3*1.7*\sqt/2-0.1*\sqt,0.75*\sqt+3*1.7/2-0.1)--(3*1.7*\sqt/2+0.1*\sqt,0.75*\sqt+3*1.7/2+0.1);

\node at (1.7*\sqt/4,2.0) {$\phi_{0}$};\node at (3*1.7*\sqt/4,2.8) {$\phi_{1}$};
\node at (5*1.7*\sqt/4,3.7) {$\phi_2$};\node at (7*1.7*\sqt/4,4.6) {$\phi_{n-1}$};

\centerarc[->,blue](-0.3,0.75*\sqt)(40:320:0.3);
\node at (-0.85,0.75*\sqt) {\small\textcolor{blue}{$\OO_K$}};

\draw[->] (0+0.1,0.75*\sqt-0.1*\sqt)--(0.75-0.1,0.1*\sqt);
\draw[fill=black] (0.75,0) circle [radius=0.1];        
\node at  (1.25,-0.3) {$F_0 = E_0$};
\node at (0.17,0.75*\sqt/2-0.1) {$\psi_0$};

\draw[fill=black] (0.75+1.7*\sqt/2,1.7/2) circle [radius=0.1]; 
\draw[->](0.75+0.1*\sqt,0.1)-- (0.75+1.7*\sqt/2-0.1*\sqt,1.7/2-0.1);
\node at  (0.75+1.7*\sqt/2,1.7/2-0.3) {$F_1$};
\node at (1.7*\sqt/2,0.2) {$\phi_{0}'$};
\draw[->](1.7*\sqt/2+0.1,0.75*\sqt+1.7/2-0.1*\sqt) --(0.75+1.7*\sqt/2-0.1,1.7/2+0.1*\sqt);
\node at (0.2+1.7*\sqt/2,0.75*\sqt/2+0.75) {$\psi_1$};

\draw[fill=black] (0.75+1.7*\sqt,1.7) circle [radius=0.1]; 
\node at  (0.75+1.7*\sqt,1.7-0.3) {$F_2$};
\draw[->](0.75+1.7*\sqt/2+0.1*\sqt,1.7/2+0.1)-- (0.75+1.7*\sqt-0.1*\sqt,1.7-0.1);
\node at (1.7*\sqt,1.0) {$\phi_{1}'$};
\draw[->](1.7*\sqt+0.1,0.75*\sqt+1.7-0.1*\sqt)--(0.75+1.7*\sqt-0.1,1.7+0.1*\sqt);
\node at (0.2+1.7*\sqt,0.75*\sqt/2+2*0.8) {$\psi_2$};

\draw[->](0.75+3*1.7*\sqt/2+0.1*\sqt,3*1.7/2+0.1)-- (0.75+2*1.7*\sqt-0.1*\sqt,2*1.7-0.1);
\draw[->](0.75+1.7*\sqt+0.1*\sqt,1.7+0.1)-- (0.75+3*1.7*\sqt/2-0.1*\sqt,3*1.7/2-0.1);
\draw[fill=black] (0.75+2*1.7*\sqt,2*1.7) circle [radius=0.1]; 
\node at  (0.75+2*1.7*\sqt,2*1.7-0.3) {$F_n$};
\node at (3*1.7*\sqt/2+0.1,1.9) {$\phi_{2}'$};\node at (2*1.7*\sqt+0.2,2.7) {$\phi_{n-1}'$};
\draw[->](2*1.7*\sqt+0.1,0.75*\sqt+2*1.7-0.1*\sqt)--(0.75+2*1.7*\sqt-0.1,2*1.7+0.1*\sqt);
\draw[dotted](0.75+3*1.7*\sqt/2-0.1*\sqt,3*1.7/2-0.1)--(0.75+3*1.7*\sqt/2+0.1*\sqt,3*1.7/2+0.1);
\node at (0.2+2*1.7*\sqt,0.75*\sqt/2+4*0.85-0.1) {$\psi_n$};

\end{tikzpicture}
\end{center}

By sending $\qq \subset \OO_K$ to $\psi_0 : E_0 \rightarrow F_0 = E_0/E_0[\qq] \isom E_0$,
and pushing forward to $\psi_n : E_n \rightarrow F_n$, we obtain the effective action of
$\Cl(\OO)$ on $\ell$-isogeny chains of length $n$ from $E_0$. In other words, the action of an ideal $\qq$ becomes non trivial while pushing it down along a descending isogeny chain due to the fact that $\qq\cap\OO_i$ becomes ``less and less principal''.

In order to have the action of $\Cl(\OO)$ cover a large portion of the supersingular
elliptic curves, we require $\ell^n \sim p$, i.e., $n \sim \log_\ell(p)$.
%
\vspace{2mm}

\noindent{\bf Recall.}
The previous estimates are based on two very important results. Observe that the number of oriented elliptic curves that we can reach after $n$ steps equals the class number $h(\OO_n)$ of $\OO_n=\ZZ+\ell^n\OO_K$. It is well-known \cite[\S 7.D]{Cox97} that:
\begin{equation}\label{equation:classnumberformula}
h(\ZZ + m\OO_K)=\frac{h(\OO_K)m}{\left[\OO_K^\times:\OO^\times\right]}\prod_{p\mid m}\left(1-\left(\frac{\Delta_K}{p}\right)\frac{1}{p}\right)
\end{equation}
where \cite[VI.3]{Cohn1980}
\[
\OO_K^\times=\begin{cases}
\{\pm 1\}~&\text{if }\Delta_K<-4\\
\{\pm1,\pm i \}&\text{if }\Delta_K=-4\\
\{\pm1,\pm\zeta_3,\pm\zeta_3^2 \}&\text{if }\Delta_K=-3
\end{cases}
~~\Rightarrow~~
\left[\OO_K^\times:\OO^\times\right]=\begin{cases}
1~&\text{if }\Delta_K<-4\\
2&\text{if }\Delta_K=-4\\
3&\text{if }\Delta_K=-3
\end{cases}
\]
On the other hand, we know that the number of supersingular elliptic curves over $\FF_{p^2}$ is given by the following formula \cite[V.4]{Silv1986}:
\[
\#\text{SS}(p)=\left[\frac{p}{12}\right]+\begin{cases}
0~~&\text{if }p\equiv1\bmod 12\\
1&\text{if }p\equiv5,7\bmod 12\\
2&\text{if }p\equiv11\bmod 12
\end{cases}
\]
Therefore, in our case
\[
h(\ell^n\OO_K)=\frac{1\cdot \ell^n}{2\text{ or }3}\left(1-\left(\frac{\Delta_K}{\ell}\right)\frac{1}{\ell}\right)=\left[\frac{p}{12}\right]+\epsilon~\Longrightarrow~p\sim\ell^n
\]

To realise the class group action, it suffices to replace the above $\ell$-ladder
with its modular $\ell$-ladder.
\vspace*{-0.5cm}
\begin{center}
\begin{tikzpicture}
\draw[fill=black] (0,0.75*\sqt) circle [radius=0.1]; 
\draw[fill=black] (1.7*\sqt/2,0.75*\sqt+1.7/2) circle [radius=0.1]; 
\draw[fill=black] (1.7*\sqt,0.75*\sqt+1.7) circle [radius=0.1]; 
\draw[fill=black] (2*1.7*\sqt,0.75*\sqt+2*1.7) circle [radius=0.1]; 

\node at (0.15,0.95*\sqt) {$j_0$};\node at (1.7*\sqt/2,0.75*\sqt+1.7/2+0.3) {$j_1$};
\node at (1.7*\sqt-0.2,0.75*\sqt+1.7+0.45) {$j_2$};
\node at (2*1.7*\sqt,0.75*\sqt+2*1.7+0.3) {$j_n$};
\draw[->](0.1*\sqt,0.75*\sqt+0.1)--(1.5*\sqt/2,0.75*\sqt+1.5/2);
\draw[->](1.9*\sqt/2,0.75*\sqt+1.9/2)--(1.6*\sqt,0.75*\sqt+1.6);
\draw[->](1.8*\sqt,0.75*\sqt+1.8)--(3*1.7*\sqt/2-0.1*\sqt,0.75*\sqt+3*1.7/2-0.1);
\draw[->](3*1.7*\sqt/2+0.1*\sqt,0.75*\sqt+3*1.7/2+0.1)--(2*1.7*\sqt-0.1*\sqt,0.75*\sqt+2*1.7-0.1);

\draw[dotted](3*1.7*\sqt/2-0.1*\sqt,0.75*\sqt+3*1.7/2-0.1)--(3*1.7*\sqt/2+0.1*\sqt,0.75*\sqt+3*1.7/2+0.1);

\node at (1.7*\sqt/4,1.9) {\small$\ell$};\node at (3*1.7*\sqt/4,2.75) {\small$\ell$};
\node at (5*1.7*\sqt/4,3.6) {\small$\ell$};\node at (7*1.7*\sqt/4,4.45) {\small$\ell$};

\centerarc[->,blue](-0.3,0.75*\sqt)(40:320:0.3);
\node at (-0.85,0.75*\sqt) {\small\textcolor{blue}{$\OO_K$}};

\draw[double equal sign distance] (0+0.1,0.75*\sqt-0.1*\sqt)--(0.75-0.1,0.1*\sqt);
\draw[fill=black] (0.75,0) circle [radius=0.1];        
\node at  (0.75,-0.35) {$j'_0$};
\node at (0.17,0.75*\sqt/2-0.1) {\small $q$};

\draw[fill=black] (0.75+1.7*\sqt/2,1.7/2) circle [radius=0.1]; 
\draw[->](0.75+0.1*\sqt,0.1)-- (0.75+1.7*\sqt/2-0.1*\sqt,1.7/2-0.1);
\node at  (0.75+1.7*\sqt/2,1.7/2-0.35) {$j'_1$};
\node at (1.7*\sqt/2,0.25) {\small$\ell$};
\draw[->](1.7*\sqt/2+0.1,0.75*\sqt+1.7/2-0.1*\sqt) --(0.75+1.7*\sqt/2-0.1,1.7/2+0.1*\sqt);
\node at (0.2+1.7*\sqt/2,0.75*\sqt/2+0.75) {\small $q$};

\draw[fill=black] (0.75+1.7*\sqt,1.7) circle [radius=0.1]; 
\draw[->](0.75+1.7*\sqt/2+0.1*\sqt,1.7/2+0.1)-- (0.75+1.7*\sqt-0.1*\sqt,1.7-0.1);
\node at (1.7*\sqt,1.1) {\small$\ell$};
\draw[->](1.7*\sqt+0.1,0.75*\sqt+1.7-0.1*\sqt)--(0.75+1.7*\sqt-0.1,1.7+0.1*\sqt);
\node at (0.2+1.7*\sqt,0.75*\sqt/2+2*0.8) {\small $q$};

\draw[->](0.75+3*1.7*\sqt/2+0.1*\sqt,3*1.7/2+0.1)-- (0.75+2*1.7*\sqt-0.1*\sqt,2*1.7-0.1);
\draw[->](0.75+1.7*\sqt+0.1*\sqt,1.7+0.1)-- (0.75+3*1.7*\sqt/2-0.1*\sqt,3*1.7/2-0.1);
\draw[fill=black] (0.75+2*1.7*\sqt,2*1.7) circle [radius=0.1]; 
\node at  (0.75+2*1.7*\sqt,2*1.7-0.35) {$j'_n$};
\node at (3*1.7*\sqt/2,1.95) {\small$\ell$};\node at (2*1.7*\sqt,2.8) {\small$\ell$};
\draw[->](2*1.7*\sqt+0.1,0.75*\sqt+2*1.7-0.1*\sqt)--(0.75+2*1.7*\sqt-0.1,2*1.7+0.1*\sqt);
\draw[dotted](0.75+3*1.7*\sqt/2-0.1*\sqt,3*1.7/2-0.1)--(0.75+3*1.7*\sqt/2+0.1*\sqt,3*1.7/2+0.1);
\node at (0.2+2*1.7*\sqt,0.75*\sqt/2+4*0.85-0.1) {\small $q$};

\draw[blue,rounded corners,rotate around={-60:(1.7*\sqt+0.75/2,0.75*\sqt/2+1.7)}] (1.7*\sqt+0.75/2-1,0.75*\sqt/2+1.7+0.3) rectangle (1.7*\sqt+0.75/2+1,0.75*\sqt/2+1.7-0.3) {};

\node at (6,0.5) {
$
\begin{cases}
\Phi_\ell(j_1,j_2)=0\\
\Phi_\ell(j'_1,Y)=0\\
\Phi_q(j_2,Y)=0
\end{cases}
$};

\draw[blue](1.7*\sqt+0.75/2+0.5,0.75*\sqt/2+1.7-0.5*\sqt)--(1.7*\sqt+0.75/2+0.5,0.75*\sqt/2+1.7-0.5*\sqt-0.81);
\draw[blue,->](4,0.5)--(4.5,0.5);
\draw[blue] (4,0.5) arc (-90:-180:0.180513627);
\end{tikzpicture}
\end{center}
At the first index for which $j_i' = j(E_i/E_i[\qq_i])$ is different
from $j_i'' = j(E_i/E_i[\bar{\qq}_i])$, that is, $[\qq_i] \ne [\bar{\qq}_i]$
in $\Cl(\OO_i)$, we can solve iteratively for $j_{i+1}'$ from $j_{i}'$
and $j_{i+1}$ using the equations:
\[
\Phi_\ell(j'_{i},Y)=\Phi_q(j_{i+1},Y)=0.
\]
The action of primes $\qq$ through $\Cl(\OO)$ can be precomputed by its
action on these initial segments which permits us to separate the action
of $\qq$ and $\bar{\qq}$, hence assures a unique solution to the above
system.


\begin{center}
\begin{tikzpicture}
\draw[fill=black] (0,0) circle [radius=.1]
(2,0) circle [radius=.1]
(4,0) circle [radius=.1]
(0,-1) circle [radius=.1]
(2,-1) circle [radius=.1]
(4,-1) circle [radius=.1];
\draw[->] (0.2,0.08) -- (1.8,0.08);
\draw[<-] (0.2,-0.08) -- (1.8,-0.08);
\draw[->] (2.2,0) -- (3.8,0);
\draw[->] (0,-0.2) -- (0,-0.8);
\draw[->] (2,-0.2) -- (2,-0.8);
\draw[->] (4,-0.2) -- (4,-0.8);
\draw[->] (0,-1.2) -- (0,-1.8);
\draw[->] (2,-1.2) -- (2,-1.8);
\draw[->] (4,-1.2) -- (4,-1.8);
\draw[<-] (0.2,-1) -- (1.8,-1);
\draw[->] (2.2,-1) -- (3.8,-1);
\node[anchor=east] at (0,0) {$E_0$};
\node[anchor=west] at (4,0) {$E_0$};
\node[anchor=north west] at (2,0) {$E_0$};
\node[anchor=east] at (0,-1) {$E_1''$};
\node[anchor=west] at (4,-1) {$E_1'$};
\node[anchor=north west] at (2,-1) {$E_1$};
\begin{scope}[yshift=0.1cm]
\centerarc[<-](2,-4)(66:114:4.442135954999579);
\end{scope}
\node at (3,0.15) {$\qq$};
\node at (1,0.2) {$\qq$};
\node at (1,-0.3) {$\bar{\qq}$};
\node at (2,0.85) {$\qq^2$};
\end{tikzpicture}
\end{center}
Thus, $E_i'\ne E_i''$ if and only if $\qq^2\cap \OO_i$ is not principal and the probability that a random ideal in $\OO_i$ is principal is $1/h(\OO_i)$. In fact, we can do better; we write $\OO_K=\ZZ[\omega]$ and we observe that if $\qq^2$ was principal, then
\[
q^2=\mathrm{N}(\qq^2)=\mathrm{N}(a+b\ell^i\omega)
\]
since it would be generated by an element of $\OO_i=\ZZ+\ell^i\OO_K$. Now
\[
\mathrm{N}(a+b\ell^i)=a^2\pm abt\ell^i+b^2s\ell^{2i}~~~~~\text{where}~~~~~\omega^2+t\omega+s=0
\]
Thus, as soon as $\ell^{2i}>q^2$ we are guaranteed that $\qq^2$ is not principal.

\subsection{A first naive protocol}\label{subsection:FirstNaiveProtocol}

We now present the OSIDH cryptographic protocol based on this construction.
We first describe a simplified version as intermediate step. The reason for
doing that is twofold. On one hand it permits us to observe how the notions
introduced so far lead to a cryptographic protocol, and on the other hand
it highlights the critical security considerations and identifies the
computationally hard problems on which the security is based.

As described at the beginning of the section, we fix a maximal order
$\OO_K$ in a quadratic imaginary field $K$ of small discriminant $\Delta_K$
and a large prime $p$ such that $\left(\frac{\Delta_K}{p}\right)\ne 1$.
Further, the two parties agree on an elliptic curve $E_0$ with effective
maximal order $\OO_K$ embedded in the endomorphism ring and a descending $\ell$-isogeny chain:
\[
E_0\longrightarrow E_1 \longrightarrow E_2\longrightarrow\cdots\longrightarrow E_n.
\]
Each constructs a power smooth horizontal endomorphism $\psi$ of $E_0$ as the product
of generators of small principal ideals in $\OO_K$.  A power smooth isogeny,
for which the prime divisors and exponents of its degree are bounded, ensures
that $\psi$ can be efficiently extended to a ladder.

\noindent{\bf Remark.} In practice, we will fix $\OO_K$ to be either the Eisenstein
integers $\ZZ[\zeta_3]$ or the Gaussian integers $\ZZ[\zeta_4] (= \ZZ[i])$. Since
the ladder is descending, we have that $\End((E_i,\iota_i)) \isom \ZZ+\ell^i\OO_K$
for all $i = 0,\dots,n$.

Alice privately chooses a horizontal power smooth endomorphism
$\psi_A = \psi_0: E_0 \to F_0=E_0$, and pushes it forward to an
$\ell$-ladder of length $n$:
\begin{center}
	\begin{tikzpicture}
	\draw[fill=black] (0,1) circle [radius=.1]
	(2,1) circle [radius=.1]
	(4,1) circle [radius=.1]
	(7,1) circle [radius=.1]
	(0,0) circle [radius=.1]
	(2,0) circle [radius=.1]
	(4,0) circle [radius=.1]
	(7,0) circle [radius=.1];  

	\draw[fill=black] (5.425,0) circle [radius=.01]
	(5.50,0) circle [radius=.01]
	(5.575,0) circle [radius=.01]
	(5.425,1) circle [radius=.01]
	(5.50,1) circle [radius=.01]
	(5.575,1) circle [radius=.01];    
	\draw[->](0.15,1)--(1.85,1);
	\draw[->](2.15,1)--(3.85,1);
	\draw[->](4.15,1)--(5.35,1);
	\draw[->](5.65,1)--(6.85,1);
	\draw[->](0.15,0)--(1.85,0);
	\draw[->](2.15,0)--(3.85,0);
	\draw[->](4.15,0)--(5.35,0);
	\draw[->](5.65,0)--(6.85,0);
	\draw[->](0,0.85)--(0,0.15);
	\draw[->](2,0.85)--(2,0.15);
	\draw[->](4,0.85)--(4,0.15);
	\draw[->](7,0.85)--(7,0.15);
	\node[anchor=south] at (0,1) {\small$E_0$};
	\node[anchor=south] at (2,1) {\small$E_1$};
	\node[anchor=south] at (4,1) {\small$E_2$};
	\node[anchor=south] at (7,1) {\small$E_n$};
	\node[anchor=north] at (0,-0.01) {\small$F_0$};
	\node[anchor=north] at (2,-0.01) {\small$F_1$};
	\node[anchor=north] at (4,-0.01) {\small$F_2$};
	\node[anchor=north] at (7,-0.01) {\small$F_n$};
	\node at (1,1.2) {$\phi_0$};
	\node at (3,1.2) {$\phi_1$};
	\node at (4.75,1.2) {$\phi_2$};
	\node at (6.30,1.2) {$\phi_{n-1}$};
	\node at (1,-0.2) {$\phi_0'$};
	\node at (3,-0.2) {$\phi_1'$};
	\node at (4.75,-0.2) {$\phi_2'$};
	\node at (6.30,-0.2) {$\phi_{n-1}'$};

	\node at (-0.3,0.5) {\footnotesize$\psi_A$};

	\end{tikzpicture}
\end{center}
By Lemma~\ref{lemma:level}, this $\ell$-ladder is level, hence $\End((E_i,\iota_i))
= \End((F_i,\iota_i'))$.

The $\ell$-isogeny chain $(F_i)$ is sent to Bob, who chooses a horizontal smooth
endomorphism $\psi_B$,
and sends the resulting $\ell$-isogeny chain $(G_i)$ to Alice.  Each applies (and, eventually, push forward) the
private endomorphism to obtain $(H_i) = \psi_B \cdot (F_i) = \psi_A \cdot (G_i)$, and
$H = H_n$ is the shared secret.

In the following picture the blue arrows correspond to the orientation chosen throughout by
Alice while the red ones represent the choice made by Bob.
\begin{center}
\begin{tikzpicture}
\draw[fill=black] (0,0) circle [radius=.1]
(0,2) circle [radius=.1]
(1,1) circle [radius=.1]
(1,3) circle [radius=.1];

\draw[blue,->] (0,0.2) -- (0,1.8);
\draw[blue] (1,1.2) -- (1,1.9);
\draw[blue,->] (1,2.1) -- (1,2.8);
\draw[red,->] (0.141421,0.141421) -- (1-0.141421,1-0.141421);
\draw[red,->] (0.141421,2+0.141421) -- (1-0.141421,3-0.141421);
\draw[->] (0.2,0) -- (2.8,0);
\draw[blue,->] (0.2,2) -- (2.8,2);
\draw[->] (1.2,3) -- (3.8,3);
\draw[red] (1.2,1) -- (2.9,1);
\draw[red,->] (3.1,1) -- (3.8,1);

\node[anchor=north] at (0,-0.1) {$E_0$};
\node[anchor=east] at (0,1.7) {$F_0$};
\node[anchor=north] at (1.25,0.95) {$G_0$};
\node[anchor=south] at (1,3) {$H_0$};

\begin{scope}[xshift=3cm]  
\draw[fill=black] (0,0) circle [radius=.1]
(0,2) circle [radius=.1]
(1,1) circle [radius=.1]
(1,3) circle [radius=.1];

\draw[blue,->] (0,0.2) -- (0,1.8);
\draw[blue] (1,1.2) -- (1,1.9);
\draw[blue,->] (1,2.1) -- (1,2.8);
\draw[red,->] (0.141421,0.141421) -- (1-0.141421,1-0.141421);
\draw[red,->] (0.141421,2+0.141421) -- (1-0.141421,3-0.141421);
\draw[->] (0.2,0) -- (2.8,0);
\draw[blue,->] (0.2,2) -- (2.8,2);
\draw[->] (1.2,3) -- (3.8,3);
\draw[red] (1.2,1) -- (2.9,1);
\draw[red,->] (3.1,1) -- (3.8,1);

\node[anchor=north] at (0,-0.1) {$E_1$};
\node[anchor=east] at (0,1.7) {$F_1$};
\node[anchor=north] at (1.25,0.95) {$G_1$};
\node[anchor=south] at (1,3) {$H_1$};

\end{scope}

\begin{scope}[xshift=6cm]  
\draw[fill=black] (0,0) circle [radius=.1]
(0,2) circle [radius=.1]
(1,1) circle [radius=.1]
(1,3) circle [radius=.1];

\draw[blue,->] (0,0.2) -- (0,1.8);
\draw[blue] (1,1.2) -- (1,1.9);
\draw[blue,->] (1,2.1) -- (1,2.8);
\draw[red,->] (0.141421,0.141421) -- (1-0.141421,1-0.141421);
\draw[red,->] (0.141421,2+0.141421) -- (1-0.141421,3-0.141421);
\draw[->] (0.2,0) -- (1.8,0);
\draw[blue,->] (0.2,2) -- (1.8,2);
\draw[->] (1.2,3) -- (2.8,3);
\draw[red,->] (1.2,1) -- (2.8,1);

\node[anchor=north] at (0,-0.1) {$E_2$};
\node[anchor=east] at (0,1.7) {$F_2$};
\node[anchor=north] at (1.25,0.95) {$G_2$};
\node[anchor=south] at (1,3) {$H_2$};
\end{scope}

\begin{scope}[xshift=8cm]  
\draw[fill=black] (0,0) circle [radius=.01]
(-0.1,0) circle [radius=.01]
(0.1,0) circle [radius=.01]
(1,3) circle [radius=.01]
(0.9,3) circle [radius=.01]
(1.1,3) circle [radius=.01];

\draw[red,fill=red] (1,1) circle [radius=.01]
(0.9,1) circle [radius=.01]
(1.1,1) circle [radius=.01];

\draw[blue,fill=blue] (0,2) circle [radius=.01]
(-0.1,2) circle [radius=.01]
(0.1,2) circle [radius=.01];

\draw[->] (0.2,0) -- (1.8,0);
\draw[blue,->] (0.2,2) -- (1.8,2);
\draw[->] (1.2,3) -- (2.8,3);
\draw[red] (1.2,1) -- (1.9,1);
\draw[red,->] (2.1,1) -- (2.8,1);
\end{scope}

\begin{scope}[xshift=10cm]  
\draw[fill=black] (0,0) circle [radius=.1]
(0,2) circle [radius=.1]
(1,1) circle [radius=.1]
(1,3) circle [radius=.1];

\draw[blue,->] (0,0.2) -- (0,1.8);
\draw[blue,->] (1,1.2) -- (1,2.8);
\draw[red,->] (0.141421,0.141421) -- (1-0.141421,1-0.141421);
\draw[red,->] (0.141421,2+0.141421) -- (1-0.141421,3-0.141421);

\node[anchor=north] at (0,-0.1) {$E_n$};
\node[anchor=east] at (0,1.7) {$F_n$};
\node[anchor=north] at (1.25,0.95) {$G_n$};
\node[anchor=south] at (1,3) {$H_n$};
\end{scope}

\end{tikzpicture}
\end{center}

\begin{center}
\begin{tabular}{L{3cm} C{4cm} C{4.1cm}}
\hline
\multicolumn{3}{l}{{\bf PUBLIC DATA:} A descending $\ell$-isogeny chain $E_0\to E_1\to\cdots\to E_n$}\\
\hline
&{\bf ALICE}&{\bf BOB}\\
Choose a smooth endomorphism of $E_0$ in $\OO_K$&
\begin{tikzpicture}[scale=0.5,baseline=0]
\draw[fill=black] (0,0.75*\sqt) circle [radius=0.1]; 
\draw[fill=black] (0.75,0) circle [radius=0.1];        
\draw[double equal sign distance] (0+0.1,0.75*\sqt-0.1*\sqt)--(0.75-0.1,0.1*\sqt);
\centerarc[->,blue](-0.3,0.75*\sqt)(40:320:0.3);
\node at (0.22,0.94*\sqt) {\footnotesize$E_0$}; \node at  (0.77,-0.39) {\footnotesize$F_0$};
\end{tikzpicture}&
\begin{tikzpicture}[scale=0.5,baseline=0]
\draw[fill=black] (0,0.75*\sqt) circle [radius=0.1]; 
\draw[fill=black] (0.75,0) circle [radius=0.1];        
\draw[double equal sign distance] (0+0.1,0.75*\sqt-0.1*\sqt)--(0.75-0.1,0.1*\sqt);
\centerarc[->,red](-0.3,0.75*\sqt)(40:320:0.3);
\node at (0.22,0.94*\sqt) {\footnotesize$E_0$}; \node at  (0.77,-0.39) {\footnotesize$G_0$};
\end{tikzpicture}\\
Push it forward to depth $n$&$\displaystyle \underbrace{F_0\to F_1\to\cdots\to F_n}_{\psi_A\tikzmark{a}}$&$\displaystyle \underbrace{G_0\to G_1\to\cdots\to G_n}_{\tikzmark{b}\psi_B}$\\
Exchange data&&\\
&$(G_i)$\tikzmark{c}&\tikzmark{d}$(F_i)$
\begin{tikzpicture}
[
remember picture,
overlay,
-latex,
color=green,
yshift=1ex,
shorten >=1pt,
shorten <=1pt,
]
\draw ({pic cs:a}) -- ({pic cs:d});
\draw ({pic cs:b}) -- ({pic cs:c});
\end{tikzpicture}
\\
Compute shared secret&Compute $\psi_A\cdot(G_i)$&Compute $\psi_B\cdot(F_i)$\\
\hline
\multicolumn{3}{l}{In the end, Alice and Bob share a new chain $E_0\to H_1\to\cdots\to H_n$}\\
\hline
\end{tabular}
\end{center}
This naive protocol reveals too much information and is susceptible to attack
by computing the endomorphism rings of the end curves $\End(E_n)$, $\End(F_n)$,
and $\End(G_n)$.
In general, the problem of computing an isogeny between two supersingular elliptic
curves $E$ and $F$ knowing $\End(E)$ is broadly equivalent to the task of computing
$\End(F)$ \cite{GV2018,EHLMP2018}.  Kohel's algorithm~\cite{Kohel1996}, and the refinement
of Galbraith~\cite{G1999}, compute several paths in the isogeny graph to find
isogenies $F \to F$. Thus, as noted in \cite{GV2018}, computing $\End(F)$ can be
reduced to finding an endomorphism $\phi:F\to F$ that is not in $\ZZ[\pi]$.

\noindent{\bf Remark.}
Observe that in SIDH and CSIDH the endomorphism ring of the starting elliptic curve
is known since the shared initial curve is chosen to have special form.
In OSIDH the situation changes: we need to find an isogeny starting from $E_n$,
and not the curve $E_0$ for which we have an explicit description of the
endomorphism ring.
However, knowing $\End(E_0)$, we can deduce at each step
\[
\ZZ+\ell\End(E_i) \isom \ZZ+\phi_i\End(E_i)\hat{\phi}_i \subset\End(E_{i+1})
\]
and thus we obtain the inclusion $\ZZ+\ell^n\End(E_0)\hookrightarrow\End(E_{n})$.

Notice that, in general, knowing the existence of a copy of an imaginary quadratic
order inside the maximal order of a quaternion algebra does not guarantee the
knowledge of the embedding as there might be many \cite[II.5]{Eic1973}.
In this case, from the knowledge of a subring $\ZZ+\ell\End(E_i)$ of finite
index $\ell^3$ we can reconstruct $\End(E_{i+1})$ step-by-step from the $\ell$-isogeny
chain $E_0\to E_1\to\ldots\to E_n$, and hence compute $\End(E_n)$.

In the naive protocol we also share the full isogeny chain $(F_i)$ (or their
$j$-invariant sequence), which allows an adversary to deduce the oriented
endomorphism ring
\[
\ZZ+\ell^n\OO_K\hookrightarrow\End(F_n)
\]
of the terminal elliptic curve $F = F_n$.  This gives enough information to
deduce $\Hom(E,F)$ and construct a representative smooth ideal in $\Cl(O)$
sending $E$ to $F$.


We observe that there is another approach to this problem which uses only properties
of the ideal class group. Suppose we have a $K$-descending $\ell$-isogeny chain
$
E_0\longrightarrow E_1\longrightarrow\ldots\longrightarrow E_n
$
with
\[
\End(E_0)\supsetneq\OO_K=\OO_0\supset\OO_1\supset\ldots\supset\OO_n\simeq\ZZ+\ell^n\OO_K
\]
This induces a sequence at the level of class groups
 \begin{center}
	\begin{tikzpicture}[>=stealth,->,shorten >=2pt,looseness=.5,auto]
	\matrix (M)[matrix of math nodes,row sep={6mm,between origins},
	column sep={6mm,between origins}]{
		\Cl(\OO_n)&[15mm]\cdots&[15mm]\Cl(\OO_i)&[15mm]\cdots&[15mm]\Cl(\OO_K)\\
		\rotatebox{90}{$\simeq$}&&\rotatebox{90}{$\simeq$}&&\rotatebox{90}{$\simeq$}\\
		\frac{\left(\OO_K/\ell^n\OO_K\right)^\times}{\overline{\OO}_K^\times\left(\ZZ/\ell^n\ZZ\right)^\times}&\cdots&\frac{\left(\OO_K/\ell^i\OO_K\right)^\times}{\overline{\OO}_K^\times\left(\ZZ/\ell^i\ZZ\right)^\times}&\cdots&\{1\}\\
	};
	\draw[->](M-1-1)--(M-1-2);
	\draw[->](M-1-2)--(M-1-3);
	\draw[->](M-1-3)--(M-1-4);
	\draw[->](M-1-4)--(M-1-5);
	\draw[->](M-3-1)--(M-3-2);
	\draw[->](M-3-2)--(M-3-3);
	\draw[->](M-3-3)--(M-3-4);
	\draw[->](M-3-4)--(M-3-5);
	\end{tikzpicture}
\end{center}
In particular, there exists a surjection
\[
\Cl(\OO_{i+1})\simeq\frac{\left(\OO_K/\ell^{i+1}\OO_K\right)^\times}{\overline{\OO}_K^\times\left(\ZZ/\ell^{i+1}\ZZ\right)^\times}\dhxrightarrow{~~~~~~~}{}\frac{\left(\OO_K/\ell^i\OO_K\right)^\times}{\overline{\OO}_K^\times\left(\ZZ/\ell^i\ZZ\right)^\times}\simeq\Cl(\OO_i)
\]
whose kernel is easily described. First, the map $\psi:\Cl(\OO_1)\to\Cl(\OO_K)$ has kernel
\begin{equation*}
\setlength{\arraycolsep}{0.5pt}
\renewcommand{\arraystretch}{1.2}
\left\{\begin{array}{l @{\quad} l l}
\FF_{\ell^2}^\times/\FF_{\ell}^\times&\text{of order }\ell+1~~~~~&\text{if }\ell\text{ is inert}\\
\left(\FF_{\ell}^\times\times\FF_{\ell}^\times\right)/\FF_{\ell}^\times&\text{of order }\ell-1&\text{if }\ell\text{ splits}\\
\left(\FF_{\ell}\left[\xi\right]\right)^\times/\FF_{\ell}^\times&\text{of order }\ell&\text{if }\ell\text{ is ramified}\\
\end{array}\right.
\end{equation*}
where $\xi^2=0$ (see \cite[\S7.D]{Cox97} and \cite[\S 12]{Neu1992}).
Thereafter, for each $i>1$, the surjection $\Cl(\OO_{i+1}) \rightarrow \Cl(\OO_i)$ has cyclic
kernel of order $\ell$ by virtue of the class number formula (\ref{equation:classnumberformula}),
and hence we have a short exact sequence
\[
1 \rightarrow
\ZZ/\ell\ZZ  \rightarrow
\Cl(\OO_{i+1}) \rightarrow \Cl(\OO_{i}) \rightarrow 1
\]
Thus if we have already constructed some representative for $\psi_A$ modulo $\ell^i\OO_K$,
we can lift it to find $\psi_A\bmod\ell^{i+1}\OO_K$ from $\ell$ possible preimages.
For each candidate lift $\psi_A\bmod\ell^{i+1}\OO_K$, we search for an smooth representative
\[
\psi_A \equiv \psi_1^{e_1}\psi_2^{e_2}\cdot\ldots\cdot\psi_t^{e_t}\bmod\ell^{i+1}\OO_K
\]
with $\deg(\psi_j)=q_j$ small. The candidate smooth lift can be applied to $E_{i+1}$
and the correct lift is that which sends $E_{i+1}$ to $F_{i+1}$ in the $\ell$-isogeny
chain (see Figure~\ref{Figure:KeyReconstruction}).
This yields an algorithm involving multiple instances of the discrete logarithm problem
in a group of order $\ell$ as in Pohlig-Hellman algorithm \cite{PH1978} and in the
generalization of Teske \cite{Tes1999}.

\begin{figure}[ht!]
  \centering
  \includegraphics[height=7cm]{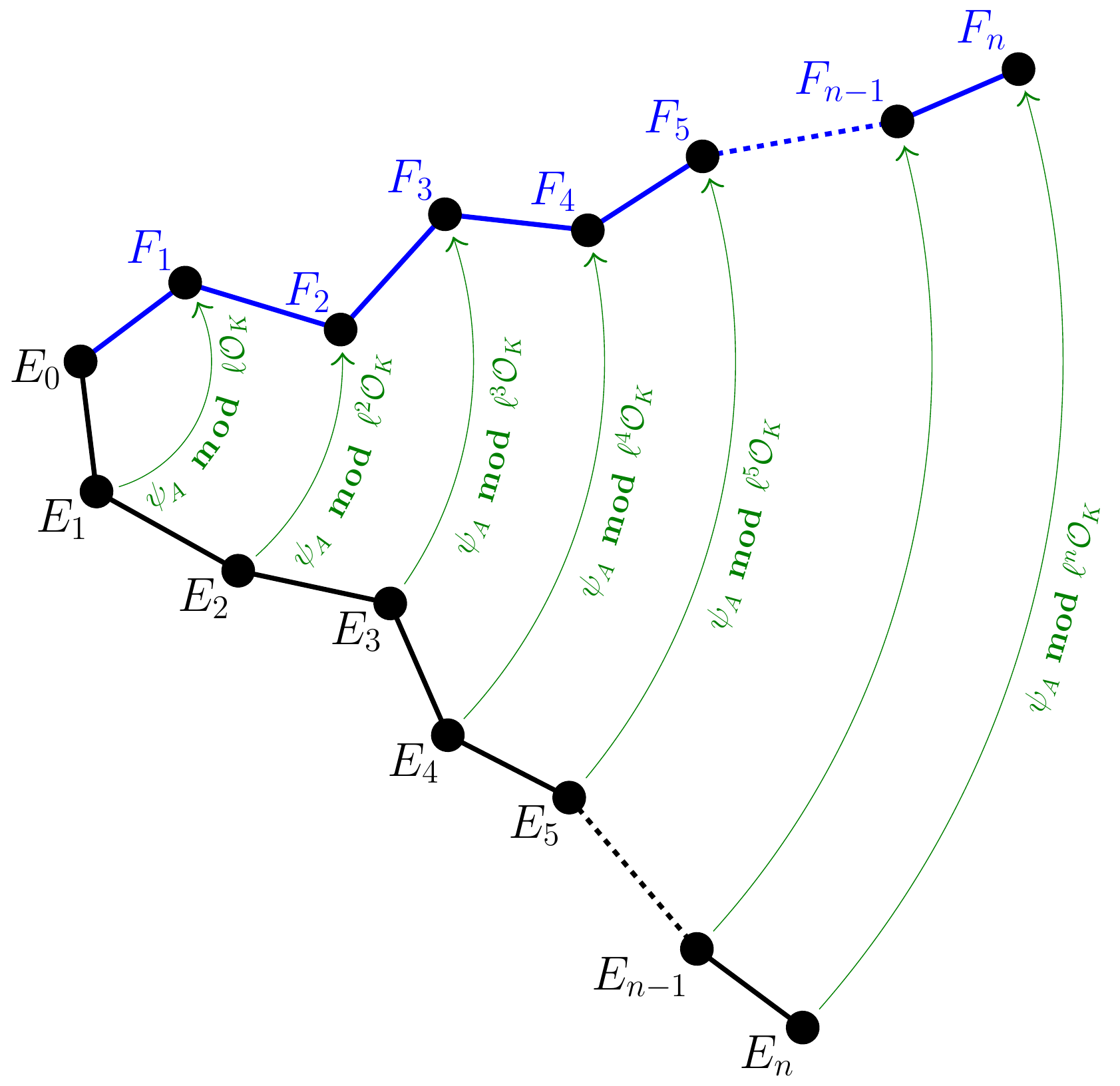}
  \caption{Construction of Alice's secret key}
  \label{Figure:KeyReconstruction}
\end{figure}

In conclusion, this na\"{i}ve protocol is insecure because two parties share the knowledge
of the entire chains $(F_i)$ and $(G_i)$.  The question becomes: how can we avoid sharing
the $\ell$-isogeny chains while still giving the other party enough information to carry
out their isogeny walk?

\subsection{The OSIDH protocol}

We now detail how to send enough public data to compute the isogenies $\psi_A$
and $\psi_B$ on $G = G_n$ and $F = F_n$, respectively, without revealing the
$\ell$-isogeny chains $(F_i)$ and $(G_i)$.  The setup remains the same with a
public choice of $\OO_K$-oriented elliptic curve $E_0$ and $\ell$-isogeny chain
\[
E_0\to E_1\to\cdots\to E_n.
\]
Moreover, a set of primes $\qq_1,\ldots,\qq_t$ (above $q_1,\ldots,q_t$) splitting in $\OO_K$ is fixed.

The first step consists of choosing the secret keys; these are represented
by a sequence of integers $(e_1,\ldots,e_t)$ such that $|e_i|\leq r$.
The bound $r$ is taken so that the number $(2r+1)^t$ of curves that can be reached is
sufficiently large. 
This choice of integers enables Alice to compute a new elliptic curve
\[
F_n=\frac{E_n}{E_n\big[\qq_1^{e_1}\cdots\qq_t^{e_t}\big]}
\]
by means of constructing the following commutative diagram
\begin{center}
	\begin{tikzpicture}
	\draw[fill=black] (0,3) circle [radius=.1]
	(0,2) circle [radius=.1]
	(0,1.1) circle [radius=.01]
	(0,1) circle [radius=.01]
	(0,0.9) circle [radius=.01]
	(0,0) circle [radius=.1];
	\draw[->] (0,2.8) -- (0,2.2);
	\draw[->] (0,1.8) -- (0,1.2);
	\draw[->] (0,0.8) -- (0,0.2);
	\draw[->] (0.2,3) -- (0.8,3);
	\draw[->] (0.2,2) -- (0.8,2);
	\draw[->] (0.2,0) -- (0.8,0);

	\node[anchor=east] at (0,3) {$E_0$};
	\node[anchor=east] at (0,2) {$E_1$};
	\node[anchor=east] at (0,0) {$E_n$};

	\begin{scope}[xshift=1cm]  
	\draw[fill=black] (0,3) circle [radius=.1]
	(0,2) circle [radius=.1]
	(0,1.1) circle [radius=.01]
	(0,1) circle [radius=.01]
	(0,0.9) circle [radius=.01]
	(0,0) circle [radius=.1];
	\draw[->] (0,2.8) -- (0,2.2);
	\draw[->] (0,1.8) -- (0,1.2);
	\draw[->] (0,0.8) -- (0,0.2);
	\draw[->] (0.2,3) -- (0.8,3);
	\draw[->] (0.2,2) -- (0.8,2);
	\draw[->] (0.2,0) -- (0.8,0);

	\node[anchor=south] at (0,3) {$\frac{E_0}{E_0\left[\qq_1\right]}$};
	\node[anchor=south] at (0,3.7) {\rotatebox{90}{$=$}};
	\node[anchor=south] at (0,4) {$E_0$};

	\node[anchor=north] at (0,-0.05) {$F_n^{(1)}$};
	\end{scope}

	\begin{scope}[xshift=2cm]
	\draw[fill=black] (0,3) circle [radius=.01]
	(-0.1,3) circle [radius=.01]
	(0.1,3) circle [radius=.01]
	(0,2) circle [radius=.01]
	(-0.1,2) circle [radius=.01]
	(0.1,2) circle [radius=.01]
	(0,0) circle [radius=.01]
	(-0.1,0) circle [radius=.01]
	(0.1,0) circle [radius=.01];
	\draw[->] (0.2,3) -- (0.8,3);
	\draw[->] (0.2,2) -- (0.8,2);
	\draw[->] (0.2,0) -- (0.8,0);
	\end{scope}

	\begin{scope}[xshift=3cm]  
	\draw[fill=black] (0,3) circle [radius=.1]
	(0,2) circle [radius=.1]
	(0,1.1) circle [radius=.01]
	(0,1) circle [radius=.01]
	(0,0.9) circle [radius=.01]
	(0,0) circle [radius=.1];
	\draw[->] (0,2.8) -- (0,2.2);
	\draw[->] (0,1.8) -- (0,1.2);
	\draw[->] (0,0.8) -- (0,0.2);
	\draw[->] (0.2,3) -- (0.8,3);
	\draw[->] (0.2,2) -- (0.8,2);
	\draw[->] (0.2,0) -- (0.8,0);
	\node[anchor=south] at (-0.1,3) {$\frac{E_0}{E_0\left[\qq_1^{e_1}\right]}$};
	\node[anchor=south] at (-0.1,3.7) {\rotatebox{90}{$=$}};
	\node[anchor=south] at (-0.1,4) {$E_0$};

	\node[anchor=north] at (0,-0.05) {$F_n^{(e_1)}$};
	\end{scope}

	\begin{scope}[xshift=4cm]  
	\draw[fill=black] (0,3) circle [radius=.1]
	(0,2) circle [radius=.1]
	(0,1.1) circle [radius=.01]
	(0,1) circle [radius=.01]
	(0,0.9) circle [radius=.01]
	(0,0) circle [radius=.1];
	\draw[->] (0,2.8) -- (0,2.2);
	\draw[->] (0,1.8) -- (0,1.2);
	\draw[->] (0,0.8) -- (0,0.2);
	\draw[->] (0.2,3) -- (0.8,3);
	\draw[->] (0.2,2) -- (0.8,2);
	\draw[->] (0.2,0) -- (0.8,0);

	\node[anchor=south] at (0.1,3) {$\frac{E_0}{E_0\left[\qq_1^{e_1}\qq_2^1\right]}$};
	\node[anchor=south] at (0.1,3.7) {\rotatebox{90}{$=$}};
	\node[anchor=south] at (0.1,4) {$E_0$};

	\node[anchor=north] at (0,-0.05) {$F_n^{(e_1,1)}$};
	\end{scope}

	\begin{scope}[xshift=5cm]
	\draw[fill=black] (0,3) circle [radius=.01]
	(-0.1,3) circle [radius=.01]
	(0.1,3) circle [radius=.01]
	(0,2) circle [radius=.01]
	(-0.1,2) circle [radius=.01]
	(0.1,2) circle [radius=.01]
	(0,0) circle [radius=.01]
	(-0.1,0) circle [radius=.01]
	(0.1,0) circle [radius=.01];
	\draw[->] (0.2,3) -- (0.8,3);
	\draw[->] (0.2,2) -- (0.8,2);
	\draw[->] (0.2,0) -- (0.8,0);
	\end{scope}

	\begin{scope}[xshift=6cm]  
	\draw[fill=black] (0,3) circle [radius=.1]
	(0,2) circle [radius=.1]
	(0,1.1) circle [radius=.01]
	(0,1) circle [radius=.01]
	(0,0.9) circle [radius=.01]
	(0,0) circle [radius=.1];
	\draw[->] (0,2.8) -- (0,2.2);
	\draw[->] (0,1.8) -- (0,1.2);
	\draw[->] (0,0.8) -- (0,0.2);
	\draw[->] (0.2,3) -- (0.8,3);
	\draw[->] (0.2,2) -- (0.8,2);
	\draw[->] (0.2,0) -- (0.8,0);

	\node[anchor=south] at (0,3) {$\frac{E_0}{E_0\left[\qq_1^{e_1}\qq_2^{e_2}\right]}$};
	\node[anchor=south] at (0,3.7) {\rotatebox{90}{$=$}};
	\node[anchor=south] at (0,4) {$E_0$};

	\node[anchor=north] at (0,-0.05) {$F_n^{(e_1,e_2)}$};
	\end{scope}

	\begin{scope}[xshift=7cm]
	\draw[fill=black] (0,3) circle [radius=.01]
	(-0.1,3) circle [radius=.01]
	(0.1,3) circle [radius=.01]
	(0,2) circle [radius=.01]
	(-0.1,2) circle [radius=.01]
	(0.1,2) circle [radius=.01]
	(0,0) circle [radius=.01]
	(-0.1,0) circle [radius=.01]
	(0.1,0) circle [radius=.01];
	\draw[->] (0.2,3) -- (0.8,3);
	\draw[->] (0.2,2) -- (0.8,2);
	\draw[->] (0.2,0) -- (0.8,0);
	\end{scope}

	\begin{scope}[xshift=8cm]  
	\draw[fill=black] (0,3) circle [radius=.1]
	(0,2) circle [radius=.1]
	(0,1.1) circle [radius=.01]
	(0,1) circle [radius=.01]
	(0,0.9) circle [radius=.01]
	(0,0) circle [radius=.1];
	\draw[->] (0,2.8) -- (0,2.2);
	\draw[->] (0,1.8) -- (0,1.2);
	\draw[->] (0,0.8) -- (0,0.2);
	\draw[->] (0.2,3) -- (0.8,3);
	\draw[->] (0.2,2) -- (0.8,2);
	\draw[->] (0.2,0) -- (0.8,0);

	\node[anchor=south] at (0,3) {$\frac{E_0}{E_0\left[\qq_1^{e_1}\ldots\qq_{t-1}^{e_{t\text{-}1}}\right]}$};
	\node[anchor=south] at (0,3.7) {\rotatebox{90}{$=$}};
	\node[anchor=south] at (0,4) {$E_0$};

	\node[anchor=north] at (0,-0.05) {$F_n^{(e_1,\ldots,e_{t\text{-}1})}$};
	\end{scope}

	\begin{scope}[xshift=9cm]
	\draw[fill=black] (0,3) circle [radius=.01]
	(-0.1,3) circle [radius=.01]
	(0.1,3) circle [radius=.01]
	(0,2) circle [radius=.01]
	(-0.1,2) circle [radius=.01]
	(0.1,2) circle [radius=.01]
	(0,0) circle [radius=.01]
	(-0.1,0) circle [radius=.01]
	(0.1,0) circle [radius=.01];
	\draw[->] (0.2,3) -- (0.8,3);
	\draw[->] (0.2,2) -- (0.8,2);
	\draw[->] (0.2,0) -- (0.8,0);
	\end{scope}

	\begin{scope}[xshift=10cm]  
	\draw[fill=black] (0,3) circle [radius=.1]
	(0,2) circle [radius=.1]
	(0,1.1) circle [radius=.01]
	(0,1) circle [radius=.01]
	(0,0.9) circle [radius=.01]
	(0,0) circle [radius=.1];
	\draw[->] (0,2.8) -- (0,2.2);
	\draw[->] (0,1.8) -- (0,1.2);
	\draw[->] (0,0.8) -- (0,0.2);

	\node[anchor=south] at (0,3) {$\frac{E_0}{E_0\left[\qq_1^{e_1}\ldots\qq_t^{e_t}\right]}$};
	\node[anchor=south] at (0,3.7) {\rotatebox{90}{$=$}};
	\node[anchor=south] at (0,4) {$E_0$};

	\node[anchor=west] at (0,2.8) {$F_0$};
	\node[anchor=west] at (0,2) {$F_1$};
	\node[anchor=west] at (0,0.2) {$F_n$};

	\node[anchor=north] at (0,-0.05) {$F_n^{(e_1,\ldots,e_{t})}$};

	\end{scope}

	\end{tikzpicture}
\end{center}

\noindent {\bf Remark.} Observe that this is just a union of $q_i$-ladders.

At this point the idea is to exchange curves $F_n$ and $G_n$ and
to apply the same process again starting from the elliptic curve
received from the other party. Unfortunately, this is not enough
to get to the same final elliptic curve. Once Alice receives
the unoriented curve $G_n$ computed by Bob she also needs additional
information for each prime $\qq_i$:
\begin{center}
\begin{tikzpicture}
\draw[fill=black] (0,0) circle [radius=.1];
\draw[->](0.2,0) -- (3.2,0);
\draw[->](-0.2,0) -- (-3.2,0);
\node at (0,0.7) {\footnotesize Bob's curve};
\node at (0,0.3) {\footnotesize $G_n$};
\node at (-1.6,-0.2) {\footnotesize Horizontal $p_i$-isogeny};
\node at (-1.6,-0.5) {\footnotesize with kernel $G_n[\bar{\qq}_i]$};
\node at (1.6,-0.2) {\footnotesize Horizontal $p_i$-isogeny};
\node at (1.6,-0.5) {\footnotesize with kernel $G_n[\qq_i]$};
\end{tikzpicture}
\end{center}
but she has no information as to which directions --- out of $q_i+1$
total $q_i$-isogenies --- to take as $\qq_i$ and $\bar{\qq}_i$.
For this reason, once that they have constructed their elliptic curves
$F_n$ and $G_n$, they precompute, for each prime $\qq_i$, the
$q_i$-isogeny chains coming from $\bar{\qq}_i^j$ (denoted by the
class $\qq_i^{-j}$) and $\qq_i^j$:
\[
F_{n,i}^{(-r)}\leftarrow\cdots\leftarrow F_{n,i}^{(-1)}\leftarrow F_n\to F_{n,i}^{(1)}\to\cdots\to F_{n,i}^{(r-1)}\to F_{n,i}^{(r)}
\]
and
\[
G_{n,i}^{(-r)}\leftarrow\cdots\leftarrow G_{n,i}^{(-1)}\leftarrow G_n\to G_{n,i}^{(1)}\to\cdots\to G_{n,i}^{(r-1)}\to G_{n,i}^{(r)}
\]
Now Alice obtains from Bob the curve $G_n$ and, for each $i$, the horizontal
$q_i$-isogeny chains determined by the isogenies with kernels $G_n[\qq_i^j]$.
With this information Alice can take $e_1$ steps in the $\qq_1$-isogeny chain and
push forward all the $\qq_i$-isogeny chains for $i>1$.

\noindent {\bf Remark.} We recall that pushing forward means constructing a ladder which transmits all the information about the commutative action of $\qq_i^{e_i}$ in the class group.

\begin{center}
\begin{tikzpicture}

\node at (0,0) {\footnotesize $G_n$};
\draw[->] (-0.3,0)--(-1.05,0);
\draw[->] (0.3,0)--(1.05,0);
\draw[->,green] (0,0.2)--(0,1.2);
\draw[->,green] (0,-0.2)--(0,-1.2);
\draw[->,red] (0.1,0.1*\sqt)--(0.5,0.5*\sqt); \draw[->,red] (-0.1,-0.1*\sqt)--(-0.5,-0.5*\sqt);
\draw[->,blue] (0.1*\sqt,0.1)--(0.5*\sqt,0.5); \draw[->,blue] (-0.1*\sqt,-0.1)--(-0.5*\sqt,-0.5);
\node at (0.75,-0.2) {\footnotesize $\textcolor{black}{\qq_1}$};
\node at (-0.2,0.75) {\footnotesize $\textcolor{green}{\qq_2}$};
\node at (0.6,0.6*\sqt) {\footnotesize $\textcolor{red}{\qq_3}$};
\node at (0.6*\sqt,0.6) {\footnotesize $\textcolor{blue}{\qq_4}$};
\node at (-1.5,0) {\footnotesize $G_{n,1}^{(-1)}$};
\node at (1.5,0) {\footnotesize $G_{n,1}^{(1)}$};
\node at (0,1.5) {\footnotesize $G_{n,2}^{(1)}$};
\node at (0,-1.5) {\footnotesize $G_{n,2}^{(-1)}$};
\node at (3,0) {\footnotesize $G_{n,1}^{(2)}$};
\node at (5,0) {\footnotesize $G_{n,1}^{(r)}$};
\node at (-3,0) {\footnotesize $G_{n,1}^{(-2)}$};
\node at (-5,0) {\footnotesize $G_{n,1}^{(e_1)}$};
\node at (-7,0) {\footnotesize $G_{n,1}^{(-r)}$};
\draw[fill=black] (-4,0) circle [radius=0.01]
(-4.1,0) circle [radius=0.01]
(-3.9,0) circle [radius=0.01]
(-6,0) circle [radius=0.01]
(-6.1,0) circle [radius=0.01]
(-5.9,0) circle [radius=0.01];

\draw[fill=black] (4,0) circle [radius=0.01]
(4.1,0) circle [radius=0.01]
(3.9,0) circle [radius=0.01];

\node at (0,5.5) {\footnotesize $G_{n,2}^{(r)}$};
\node at (0,-3.5) {\footnotesize $G_{n,2}^{(-r)}$};

\draw[->] (-1.95,0)--(-2.55,0);
\draw[->] (-3.45,0)--(-3.8,0);
\draw[->] (-4.2,0)--(-4.55,0);
\draw[->] (-5.45,0)--(-5.8,0);
\draw[->] (-6.2,0)--(-6.55,0);
\draw[->] (1.95,0)--(2.55,0);
\draw[->] (3.45,0)--(3.8,0);
\draw[->] (4.2,0)--(4.55,0);
\draw[black,fill=black] (0,2.4) circle [radius=0.01]
(0,2.5) circle [radius=0.01]
(0,2.6) circle [radius=0.01];
\draw[->,black] (0,1.8)--(0,2.3);
\draw[->,black] (0,2.7)--(0,3.2);
\draw[black,fill=black] (0,4.4) circle [radius=0.01]
(0,4.5) circle [radius=0.01]
(0,4.6) circle [radius=0.01];
\draw[->,black] (0,3.8)--(0,4.3);
\draw[->,black] (0,4.7)--(0,5.2);
\draw[black,fill=black] (0,-2.4) circle [radius=0.01]
(0,-2.5) circle [radius=0.01]
(0,-2.6) circle [radius=0.01];
\draw[->,black] (0,-1.8)--(0,-2.3);
\draw[->,black] (0,-2.7)--(0,-3.2);
\draw[fill=black] (-1.5,1.5) circle [radius=.1];
\draw[->] (-0.4,1.5)--(-1.3,1.5);
\draw[->,black] (-1.5,0.3)--(-1.5,1.3);
\draw[fill=black] (-3,1.5) circle [radius=.1];
\draw[->] (-1.7,1.5)--(-2.8,1.5);
\draw[->] (-3.2,1.5)--(-3.8,1.5);
\draw[fill=black] (-4,1.5) circle [radius=0.01]
(-4.1,1.5) circle [radius=0.01]
(-3.9,1.5) circle [radius=0.01];
\draw[->] (-4.2,1.5)--(-4.55,1.5);
\draw[->,black] (-3,0.3)--(-3,1.3);
\draw[->,black] (-5,0.3)--(-5,1.2);
\node at (-5,1.5) {\footnotesize $G_{n,2}^{(e_1,1)}$};
\draw[fill=black] (-1.5,3.5) circle [radius=.1];
\draw[->] (-0.4,3.5)--(-1.3,3.5);
\draw[->,black] (-1.5,1.7)--(-1.5,2.3);
\draw[black,fill=black] (-1.5,2.4) circle [radius=0.01]
(-1.5,2.5) circle [radius=0.01]
(-1.5,2.6) circle [radius=0.01];
\draw[->,black] (-1.5,2.7)--(-1.5,3.3);
\draw[fill=black] (-3,3.5) circle [radius=.1];
\draw[->] (-1.7,3.5)--(-2.8,3.5);
\draw[->] (-3.2,3.5)--(-3.8,3.5);
\draw[fill=black] (-4,3.5) circle [radius=0.01]
(-4.1,3.5) circle [radius=0.01]
(-3.9,3.5) circle [radius=0.01];
\draw[->] (-4.2,3.5)--(-4.5,3.5);
\draw[->,black] (-3,1.7)--(-3,2.3);
\draw[black,fill=black] (-3,2.4) circle [radius=0.01]
(-3,2.5) circle [radius=0.01]
(-3,2.6) circle [radius=0.01];
\draw[->,black] (-3,2.7)--(-3,3.3);
\draw[black,fill=black] (-5,2.4) circle [radius=0.01]
(-5,2.5) circle [radius=0.01]
(-5,2.6) circle [radius=0.01];
\draw[->,black] (-5,2.7)--(-5,3.2);
\draw[->,black] (-5,1.8)--(-5,2.3);
\draw[->,black,dashed] (-5,2.7)--(-5,3.2);
\draw[->,black,dashed] (-5,1.8)--(-5,2.3);
\node at (-5,3.5) {\footnotesize $\textcolor{black}{G_{n,2}^{(e_1,e_2)}}$};
\draw[->,red] (-5+0.15,0.15*\sqt)--(-5+0.5,0.5*\sqt); \draw[->,red] (-5-0.15,-0.15*\sqt)--(-5-0.5,-0.5*\sqt);
\draw[->,blue] (-5+0.2*\sqt,0.2)--(-5+0.5*\sqt,0.5); \draw[->,blue] (-5-0.15*\sqt,-0.15)--(-5-0.5*\sqt,-0.5);
\draw[->,red] (-5+0.15,0.15*\sqt+3.5)--(-5+0.5,0.5*\sqt+3.5); \draw[->,red] (-5-0.15,-0.15*\sqt+3.5)--(-5-0.5,-0.5*\sqt+3.5);
\draw[->,blue] (-5+0.22*\sqt,0.22+3.5)--(-5+0.5*\sqt,0.5+3.5); \draw[->,blue] (-5-0.2*\sqt,-0.2+3.5)--(-5-0.5*\sqt,-0.5+3.5);

\node at (0,3.5) {\footnotesize $G_{n,2}^{(e_2)}$};
\end{tikzpicture}
\end{center}
Alice repeats the process for all the $\qq_i$'s every time pushing forward the
isogenies for the primes with index strictly bigger than $i$. Finally, she
obtains a new elliptic curve
\[
H_n=\frac{E_n}{E_n\big[\qq_1^{e_1+d_1}\cdots\qq_t^{e_t+d_t}\big]}
\]
Bob follows the same process with the public data received from Alice, in order
to compute the same curve $H_n$. Recall that, in the naive protocol, Alice and Bob compute
the group action on the full $\ell$-isogeny chains:
\begin{center}
\begin{tikzpicture}
\node at (0,4) {$E_0$}; \node at (1,4) {$E_1$}; \node at (2,4) {$E_2$}; \node at (4,4) {$E_n$};
\draw[->] (0.3,4) -- (0.7,4);
\draw[->] (1.3,4) -- (1.7,4);
\draw[->] (2.3,4) -- (2.8,4);
\draw (3,4) circle [radius=.01]
(2.9,4) circle [radius=.01]
(3.1,4) circle [radius=.01];
\draw[->] (3.2,4) -- (3.7,4);

\node at (7,4) {$E_0$};\node at (8,4) {$G_1$}; \node at (9,4) {$G_2$}; \node at (11,4) {$G_n$};
\draw[->] (7.3,4) -- (7.7,4);
\draw[->] (8.3,4) -- (8.7,4);
\draw[->] (9.3,4) -- (9.8,4);
\draw
 (10,4) circle [radius=.01]
 (9.9,4) circle [radius=.01]
 (10.1,4) circle [radius=.01];
\draw[->] (10.2,4) -- (10.7,4);

\node at (0,2) {$E_0$}; \node at (1,2) {$F_1$}; \node at (2,2) {$F_2$}; \node at (4,2) {$F_n$};
\draw[->] (0.3,2) -- (0.7,2);
\draw[->] (1.3,2) -- (1.7,2);
\draw[->] (2.3,2) -- (2.8,2);
\draw (3,2) circle [radius=.01]
(2.9,2) circle [radius=.01]
(3.1,2) circle [radius=.01];
\draw[->] (3.2,2) -- (3.7,2);

\node at (7,2) {$E_0$};\node at (8,2) {$H_1$}; \node at (9,2) {$H_2$}; \node at (11,2) {$H_n$};
\draw[->] (7.3,2) -- (7.7,2);
\draw[->] (8.3,2) -- (8.7,2);
\draw[->] (9.3,2) -- (9.8,2);
\draw
  (10,2) circle [radius=.01]
  (9.9,2) circle [radius=.01]
  (10.1,2) circle [radius=.01];
\draw[->] (10.2,2) -- (10.7,2);

\def\myshift#1{\raisebox{+0.5ex}}
\def\myshiftbis#1{\raisebox{-2ex}}
\draw[blue,->,postaction={decorate,decoration={text along path,text align=center,text color=blue,text={|\scriptsize\sffamily\myshift|Alice}}}] (2,3.7) -- (2,2.3);
\draw[blue,->,postaction={decorate,decoration={text along path,text align=center,text color=blue,text={|\scriptsize\sffamily\myshift|Alice}}}] (9,3.7) -- (9,2.3);
\draw[red,->,postaction={decorate,decoration={text along path,text align=center,text color=red,text={|\scriptsize\sffamily\myshift|Bob}}}] (4.3,4) -- (6.7,4);
\draw[red,->,postaction={decorate,decoration={text along path,text align=center,text color=red,text={|\scriptsize\sffamily\myshift|Bob}}}] (4.3,2) -- (6.7,2);

\end{tikzpicture}
\end{center}
In the refined OSIDH protocol, Alice and Bob share sufficient information to determine
the curve $H_n$ without knowledge of the other party's $\ell$-isogeny chain $(G_i)$
and $(F_i)$, nor the full $\ell$-isogeny chain $(H_i)$ from the base curve $E_0$.

\begin{center}
\begin{tabular}{L{3cm} C{4cm} C{4.1cm}}
\hline
\multicolumn{3}{L{11.1cm}}{{\bf PUBLIC DATA:}
A descending $\ell$-isogeny chain $E_0\to E_1\to\cdots\to E_n$ and a set of splitting primes
$\qq_1,\ldots,\qq_t\subseteq\OO = \End(E_n)\cap K\hookrightarrow\OO_K$}\\
\hline
&{\bf ALICE}&{\bf BOB}\\
Choose integers in an interval $[-r,r]$&${\displaystyle(e_1,\ldots,e_t)}$&${\displaystyle(d_1,\ldots,d_t)}$\\
Construct an isogenous curve&
  ${\displaystyle F_n=\frac{E_n}{E_n\big[\qq_1^{e_1}\cdots\qq_t^{e_t}\big]}}$&
  ${\displaystyle G_n=\frac{E_n}{E_n\big[\qq_1^{d_1}\cdots\qq_t^{d_t}\big]}}$\\
Precompute all directions $\forall~i$&
  ${F_n\to F_{n,i}^{(1)}\to\cdots\to F_{n,i}^{(r)}}$&
  ${G_n\to G_{n,i}^{(1)}\to\cdots\to G_{n,i}^{(r)}}$\\
... and their conjugates&
  $\underbrace{F_{n,i}^{(-r)}\!\!\leftarrow\cdots\leftarrow F_{n,i}^{(-1)}\!\!\leftarrow F_n}_{\tikzmark{alpha}}$ &
  $\underbrace{G_{n,i}^{(-r)}\!\!\leftarrow\cdots\leftarrow G_{n,i}^{(-1)}\!\!\leftarrow G_n}_{\tikzmark{beta}}$\\
Exchange data&&\\
&$G_n+$directions\tikzmark{gamma}&\tikzmark{delta}$F_n+$directions
\begin{tikzpicture}
[
remember picture,
overlay,
-latex,
color=green,
yshift=1ex,
shorten >=1pt,
shorten <=1pt,
]
\draw ({pic cs:alpha}) -- ({pic cs:delta});
\draw ({pic cs:beta}) -- ({pic cs:gamma});
\end{tikzpicture}\\
Compute shared data&
  Takes $e_i$ steps in $\qq_i$-isogeny chain \& push forward information for~all~$j>i$.&
  Takes $d_i$ steps in $\qq_i$-isogeny chain \& push forward information for~all~$j>i$.\\
\hline
\multicolumn{3}{L{11.1cm}}{In the end, Alice and Bob share the same elliptic curve} \\
\multicolumn{3}{C{11.1cm}}{
$
H_n \displaystyle
 = \frac{F_n}{F_n\!\big[\qq_1^{d_1}\cdots\qq_t^{d_t}\big]}
 = \frac{G_n}{G_n\!\big[\qq_1^{e_1}\cdots\qq_t^{e_t}\big]}
 = \frac{E_n}{E_n\!\big[\qq_1^{e_1+d_1}\cdots\qq_t^{e_t+d_t}\big]}\cdot$}\\[4mm]
\hline
\end{tabular}
\end{center}
\begin{figure}[ht!]\label{Figure:OSIDH}
\includegraphics[width=\textwidth]{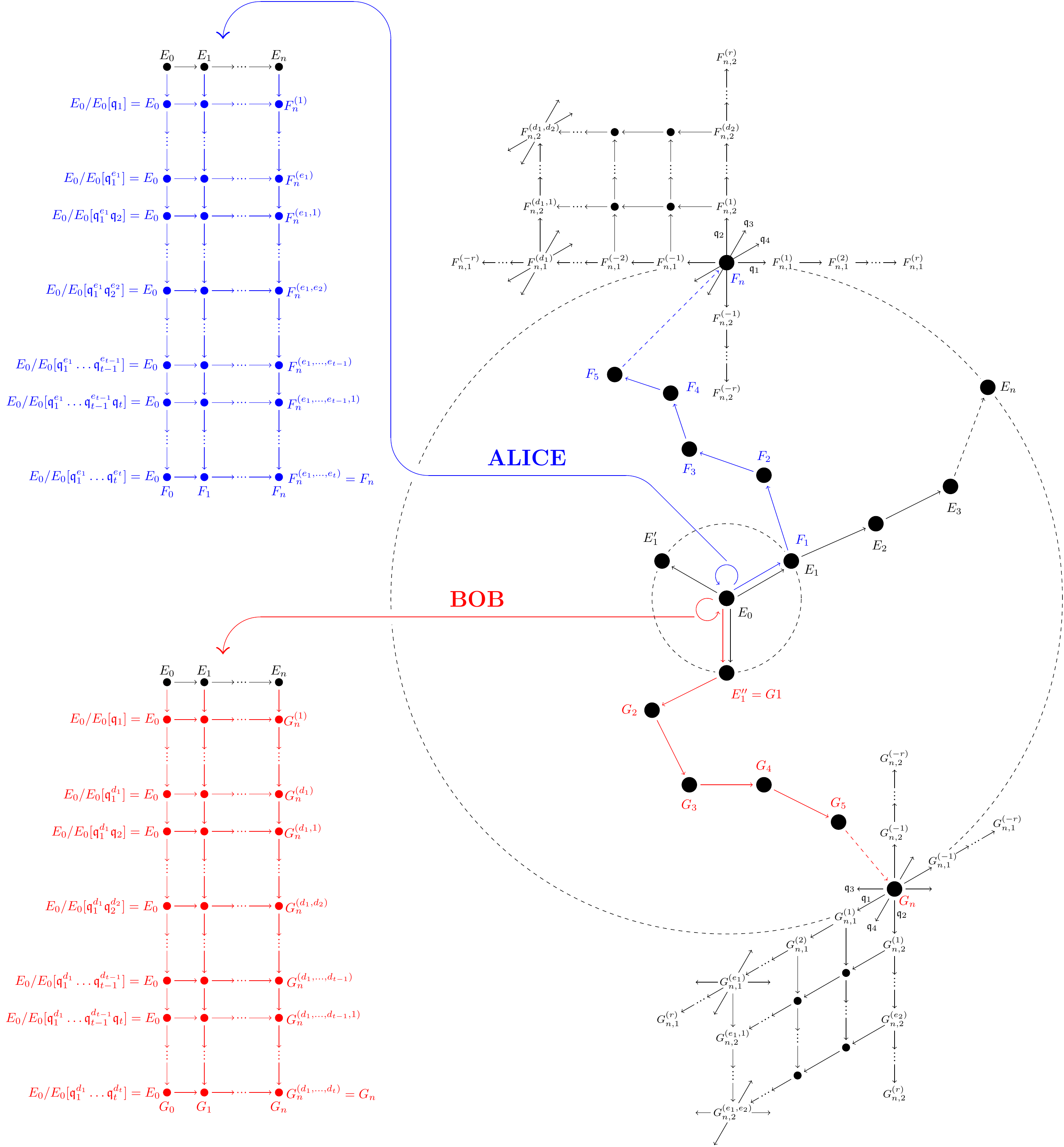}
\caption{Graphic representation of OSIDH}
\end{figure}

\noindent {\bf Remark.}
We can read this scheme using the terminology of section \ref{section:ActionClassGroup}.

After the choice of the secret key, we observe a vortex: Alice (respectively Bob) acts on an isogeny crater (that in the case of $\OO_K=\ZZ\left[\zeta_3\right]$ or $\ZZ\left[i\right]$ consists of a single points) with the primes $\qq_1^{e_1}\cdot\ldots\cdot\qq_t^{e_t}$ (respectively $\qq_1^{d_1}\cdot\ldots\cdot\qq_t^{d_t}$).

This action is eventually transmitted along the $\ell$-isogeny chain and we get a whirlpool. We can think of the isogeny volcano as rotating under the action of the secret keys and the initial $\ell$-isogeny path transforming into the two secret isogeny chains.

\section{Security considerations}\label{section:Securityparameters}

In order to ensure security of the system, we have seen that the data giving
the orientation must remain hidden.  A second consideration is the proportion
of curves attained by the action of the class group $\Cl(\OO)$, and by the
private walks $\psi_A$ and $\psi_B$ of Alice and Bob in that class group.
The size of the orbit of $\Cl(\OO)$ is controlled by the chain length $n$,
and the number of curves attained by the private walks is further limited
by the prime power data, up to exponent bounds, which we allow ourselves to
transmit.

\subsection*{Chain length}

Suppose that $(E_i)$ is an isogeny chain of length $n$, from a supersingular
elliptic curve $E_0$ oriented by $\OO_K$ of class number one, and consider
$$
\Hom(E_0,E_n) = \phi\OO_K + \psi\OO_K.
$$
As a quadratic module with respect to the degree map, its determinant is $p^2$.
If the length $n$ is of sufficient length such that $E_n$ represents a general
curve in $\mathrm{SS}(p)$, then a set of reduced basis elements $\phi$ and
$\psi$ satisfies
$$
\deg(\phi) \approx \deg(\psi) \approx \sqrt{p}.
$$

Now suppose that $\phi:E_0 \rightarrow E_n$ is the isogeny giving the
$\ell$-isogeny chain.  If $\deg(\phi) = \ell^n$ is less than $\sqrt{p}$,
then $\phi\OO_K$ is a submodule generated by short isogenies, and
$E_n$ is special.  We conclude that we must choose $n$ to be at least
$\log_\ell(p)/2$ in order to avoid an attack which seeks to determine
$\phi\OO_K$ as a distinguished submodule of low degree isogenies.

We extend this argument to consider the logarithmic proportion $\lambda$ of
supersingular elliptic curves we can reach.  In order to cover $p^\lambda$
supersingular curves, out of $|\mathrm{SS}(p)| = p/12 + \varepsilon_p$ curves,
$\deg(\phi)$ must be such that
$$
|\Cl(\OO)| = \left|\frac{\left(\OO_K/\ell^n\OO_K\right)^*}{\OO_K^*(\ZZ/\ell^n\ZZ)^*}\right|
\approx \ell^n = \deg(\phi) \approx p^\lambda.
$$
In particular, choosing $\lambda = 1$, we find that $n = \log_\ell(p)$ is the
critical length for reaching all supersingular curves.

\subsection*{Degree of private walks}

Suppose now that $E = E_n$ is a generic supersingular curve and $F$ another. Without
an $\OO_K$-module structure, we have a basis $\{\psi_1,\psi_2,\psi_3,\psi_4\}$ such
that
$$
\Hom(E,F) = \ZZ\psi_1 + \ZZ\psi_2 + \ZZ\psi_3 + \ZZ\psi_4.
$$
Assuming that $E$ and $F$ are generic relative to one another, a reduced basis
satisfies $\deg(\psi_i) \approx \sqrt{p}$, as above.  Thus the private walk
$\psi_A$ should satisfy
$$
\log_p(\deg(\psi_A)) \ge \frac{1}{2}
$$
in order that $\ZZ\psi_A$ is not a distinguished submodule of $\Hom(E,F)$.
This critical distance is the maximal that can be attained by the SIDH
protocol.

As above, another measure of the generality of $\psi_A$ is the number of curves
that can be reached by different choices of the isogeny $\psi_A$.  For a fixed
degree $m$, the number of curves which can be attained is
$$
|\PP(E[m])| \isom |\PP^1(\ZZ/m\ZZ)| \approx m.
$$
For the SIDH protocol, on has $\ell_A^{n_A} \approx \ell_B^{n_B} \approx
\sqrt{p}$, and only $\sqrt{p}$ curves out of $p/12$ can be reached.

In the CSIDH or OSIDH protocols, the degree of the isogeny is not fixed. The
total number of isogenies of any degree $d$ up to $m$ is
$$
\sum_{d=1}^m |\PP(E[d])| \approx m^2,
$$
but the choice of $\psi_A$ is restricted to a subset of $\OO$-oriented isogenies
in $\Cl(\OO)$.  Such isogenies are restricted to a class proportional to $m$.
Specifically, in the OSIDH construction, if we let $S_m \subset \OO_K$ be the
set of endomorphisms of degree up to $m$, and consider the map
$$
S_m \subset \OO_K \longrightarrow
  \frac{(\OO_K/\ell^n\OO_K)^*}{\OO_K^*(\ZZ/\ell^n\ZZ)^*} \isom \Cl(\OO).
$$
Since $|S_m| \approx m$, to cover a subset of $p^\lambda$ classes, we need
$\log_p(\deg(\psi_A)) \ge \lambda$.

\subsection*{Private walk exponents}

In practice, rather than bounding the degree, for efficient evaluation one fixes
a subset of small split primes, and the space of exponent vectors is bounded.
The instantiation CSIDH-512 (see~\cite{CLMPR2018}) uses a prime of 512 bits such
that for each of 74 primes one has a choice of 11 exponents in $[-5,5]$.
This gives 256 bits of
freedom which is of the order of magnitude to cover $h(-p) \approx \sqrt{p}$
classes (up to logarithmic factors). In this instance the class number $h(-p)$
was computed~\cite{CSI-FISH} and found to be 252 bits.

For the general OSIDH construction, we choose exponent vectors $(e_1,\dots,e_t)$ in
the space $I_1 \times \cdots \times I_t \subset \ZZ^t$, where $I_j = [-r_j,r_j]$,
defining $\psi_A$ with kernel
$$
\ker(\psi_A) = E[\qq_1^{e_1} \cdots \qq_t^{e_t}].
$$
We thus express the map to $\mathrm{SS}(p)$ as the composite of the map of
exponent vectors to the class group and the image of $\Cl(\OO)$:
$$
\prod_{j=1}^t I_j \longrightarrow \Cl(\OO) \longrightarrow \mathrm{SS}(p).
$$
In order to avoid revealing any cycles, we want the former map to be effectively
injective --- either injective or computationally difficult to find a nontrivial
element of the kernel in
$$
(I_1 \times \cdots \times I_t) \cap \ker(\ZZ^t\rightarrow\Cl(\OO)).
$$
In order to cover as many classes as possible, the latter should be nearly
surjective.  Supposing that the former map is injective with image of size
$p^\lambda$ in $\mathrm{SS}(\OO)$, this gives
$p^\lambda < \prod_{j=1}^t (2r_j+1) < |\Cl(\OO)| \approx \ell^n$.
For fixed $r = r_j$, this gives
$$
n > t\log_\ell(2r+1) > \lambda\log_\ell(p).
$$
Setting $\lambda = 1$, $\ell = 2$ and $\log_\ell(p) = 256$, the parameters $t = 74$
and $r = 5$ give critical values as in CSIDH-512, with group action mapping to
the full set of supersingular points $\mathrm{SS}(p)$.

\section{Conclusion}

By imposing the data of an orientation by an imaginary quadratic ring $\OO$,
we obtain an augmented category of supersingular curves on which the class
group $\Cl(\OO)$ acts faithfully and transitively.  This idea is already
implicit in the CSIDH protocol, in which supersingular curves over $\FF_p$
are oriented by the Frobenius subring $\ZZ[\pi] \isom \ZZ[\sqrt{-p}]$.
In contrast we consider an elliptic curve $E_0$ oriented by a CM order
$\OO_K$ of class number one.  To obtain a nontrivial group action, we consider
descending $\ell$-isogeny chains in the $\ell$-volcano, on which the class
group of an order $\OO$ of large index $\ell^n$ in $\OO_K$ acts.
The map from an $\ell$-isogeny chain to its terminal node forgets the structure
of the orientation, giving rise to a generic curve in the supersingular
isogeny graph.  Within this general framework we define a new oriented
supersingular isogeny Diffie-Hellman (OSIDH) protocol,
which has fewer restrictions on the proportion of supersingular curves
covered and on the torsion group structure of the underlying curves.
Moreover, the group action can be carried out effectively solely on the
sequences of modular points (such as $j$-invariants) on a modular curve,
thereby avoiding expensive isogeny computations, and is further amenable
to speedup by precomputations of endomorphisms on the base curve $E_0$.

\newcommand{\LNCS}{Lecture Notes in Computer Science}

\end{document}